\documentclass[12pt]{amsart}

\usepackage{amsmath, amssymb, bbm}
\DeclareMathOperator{\trace}{trace}
\hoffset=-0.5in
\textwidth=6in
\usepackage{enumitem}
\newtheorem{theorem}{Theorem}[section]
\newtheorem{lemma}[theorem]{Lemma}
\newtheorem{prop}[theorem]{Proposition}

\theoremstyle{definition}

\theoremstyle{remark}
\newtheorem{remark}[theorem]{Remark}

\numberwithin{equation}{section}
\usepackage{appendix}

\newcommand{\rr}{{\mathbb R}}
\newcommand{\rd}{{\mathbb R^d}}
\newcommand{\rmm}{{\mathbb{R}^m}}

\newcommand{\Ker}{\operatorname{Ker}}

\newcommand{\Gr}{\operatorname{Gr}}
\newcommand{\dimh}{\dim_{\mathcal{H}}}
\newcommand{\cov}{\operatorname{Cov}}

\allowdisplaybreaks

\begin{document}
\sloppy
\title[Hausdorff dimension of operator-self-similar Gaussian random fields]{The Hausdorff dimension of\\ multivariate operator-self-similar Gaussian random fields} 

\author{Ercan S\"onmez}
\address{Ercan S\"onmez, Mathematisches Institut, Heinrich-Heine-Universit\"at D\"usseldorf, Universit\"atsstr. 1, D-40225 D\"usseldorf, Germany}
\email{ercan.soenmez\@@{}hhu.de} 

\date{\today}

\begin{abstract}
Let $\{X(t) : t \in \rd \}$ be a multivariate operator-self-similar random field with values in $\rmm$. Such fields were introduced in \cite{LiXiao} and satisfy the scaling property $\{X(c^E t) : t \in \rd \} \stackrel{\rm d}{=} \{c^D X(t) : t \in \rd \}$ for all $c>0$, where $E$ is a $d \times d$ real matrix and $D$ is an $m \times m$ real matrix. We solve an open problem in \cite{LiXiao} by calculating the Hausdorff dimension of the range and graph of a trajectory over the unit cube $K = [0,1]^d$ in the Gaussian case. In particular, we enlighten the property that the Hausdorff dimension is determined by the real parts of the eigenvalues of $E$ and $D$ as well as the multiplicity of the eigenvalues of $E$ and $D$.
\end{abstract}

\keywords{Fractional random fields, Gaussian random fields, operator-self-similarity, modulus of continuity, Hausdorff dimension}
\subjclass[2010]{Primary 60G60; Secondary 28A78, 28A80, 60G15, 60G17, 60G18.}
\maketitle

\baselineskip=18pt

\section{Introduction}

In this paper we consider multivariate operator-self-similar random fields as introduced in \cite{LiXiao}. More precisely, let $E \in \mathbb{R}^{d \times d}$ and $D \in \mathbb{R}^{m \times m}$ be real matrices with positive real parts of their eigenvalues. A  random field $\{X(t) : t \in \rd \}$ with values in $\rmm$ is called multivariate operator-self-similar for $E$ and $D$ or $(E,D)$-operator-self-similar if

\begin{equation}\label{OSS}
	\{ X( c^Ex) : x \in \mathbb{R}^d \} \stackrel{\rm d}{=} \{ c^D X(x) : x \in \mathbb{R}^d \}  \quad \text{for all } c>0,
\end{equation}
where $\stackrel{\rm d}{=}$ means equality of all finite-dimensional  marginal distributions and, as usual, $c^A = \exp (A \log c) = \sum_{k=0}^{\infty} \frac{(\log c)^k}{k!} A^k$ is the matrix exponential for every matrix $A$. In the literature $\rd$ is usually referred to as the time-domain, $\rmm$ as the state space and $X$ is called a $(d,m)$-random field. Furthermore, $E$ is called the time-variable scaling exponent and $D$ the state space scaling operator.

These fields can be seen as a generalization of both operator-self-similar processes (see \cite{LahaRo, HudsonMason, Sato}) and operator scaling random fields (see \cite{BMS, BL}). Let us recall that a stochastic process $\{X(t) : t \in \rr \}$ with values in $\rmm$ is called operator-self-similar if
\begin{equation*}
	\{ X( c t) : t \in \mathbb{R} \} \stackrel{\rm d}{=} \{ c^D X(t) : t \in \mathbb{R} \}  \quad \text{for all } c>0,
\end{equation*}
whereas a scalar valued random field $\{ Y( t) : t \in \mathbb{R}^d \}$ is said to be operator scaling for $E$ and some $H>0$ if
\begin{equation*}
	\{ Y( c^E t) : t \in \mathbb{R}^d \} \stackrel{\rm d}{=} \{ c^H Y(t) : t \in \mathbb{R}^d \}  \quad \text{for all } c>0.
\end{equation*}
Operator-self-similar processes have been studied extensively during the past decades due to their theoretical importance and are also used in various applications such as physics, engineering, biology, mathematical finance, just to mention a few (see e.g. \cite{Levy, Abry, SamorodTaqq, Wackernagel, Chiles}). Several authors proposed to apply operator scaling random fields for modeling phenomena in spatial statistics, hydrology and image processing (see e.g. \cite{BonEstr, Benson, Davis}).

A very important class of operator-self-similar random fields are given by Gaussian random fields and especially by the so-called operator-fractional Brownian motion $B_D$ with state space scaling operator $D$ (see \cite{MasonXiao}), also known as operator-fractional Brownian field \cite{Baek, Didier}. The random field $B_D$ fulfills the self-similarity relation
\begin{equation*}
	\{ B_D( c t) : t \in \mathbb{R}^d \} \stackrel{\rm d}{=} \{ c^D B_D(t) : t \in \mathbb{R}^d \} 
\end{equation*}
and has stationary increments, i.e. it satisfies
\begin{equation*}
	\{ B_D( t+h) - B_D(h) : t \in \mathbb{R}^d \} \stackrel{\rm d}{=} \{ B_D(t) : t \in \mathbb{R}^d \} 
\end{equation*}
for any $h \in \rd$. We remark that Mason and Xiao \cite{MasonXiao} studied several sample path properties of $B_D$ including fractal dimensions of the range and the graph of $B_D$. More precisely, for any arbitrary Borel set $F \subset \rd$, under some additional assumptions (see \cite[Theorem 4.1]{MasonXiao}), they showed that a.s. the Hausdorff dimension of the range and graph are given by
\begin{align}\label{dimMason}
	\dim_{\mathcal{H}} B_D (F) &= \min \Big\{ m, \Big( \dimh F + \sum_{i=1}^j (\lambda_j - \lambda_i) \Big) 						\lambda_j^{-1}, j=1, \ldots, m \Big\} , \\
	\dim_{\mathcal{H}} \Gr B_D (F) &= \begin{cases} 
							\begin{aligned}
							& \dimh B_D(F) & \text{ if } \dimh F \leq \sum_{i=1}^m \lambda_i ,\\ 									& \dimh F + \sum_{i=1}^m (1- \lambda_i)  & \text{ if } \dimh F > 												\sum_{i=1}^m \lambda_i,
							\end{aligned}
						\end{cases}
\end{align}
where $0< \lambda_1 \leq \lambda_2 \leq \ldots \leq \lambda_m<1$ are the real parts of the eigenvalues of $D$. In particular, if $F = [0,1]^d$ they obtain that a.s.
\begin{align}\label{specialdim}
	\dim_{\mathcal{H}} B_D (F) &= \min \Big\{ m, \Big( d + \sum_{i=1}^j (\lambda_j - \lambda_i) \Big) 						\lambda_j^{-1}, j=1, \ldots, m \Big\} , \\
	\dim_{\mathcal{H}} \Gr B_D (F) &= \begin{cases} 
							\begin{aligned}
							& \dimh B_D(F) & \text{ if } d \leq \sum_{i=1}^m \lambda_i ,\\ 									& d + \sum_{i=1}^m (1- \lambda_i)  & \text{ if } d > 												\sum_{i=1}^m \lambda_i,
							\end{aligned}
						\end{cases}
\end{align}

The random field $B_D$ is isotropic in the sense that for every $t \in \rd$ we have
$$B_D(t) \stackrel{\rm d}{=} \| t \|^D B_D (1,0, \ldots, 0),$$
see \cite[Theorem 3.1]{MasonXiao}. Moreover, it is a generalization of the famous $m$-dimensional fractional Brownian motion, implicitely introduced in \cite{Kolm} and defined in \cite{Mandelbrot} in dimension $m=1$. However, certain applications (see e.g. \cite{BonEstr, Benson})  require that the random field is anisotropic. To reach this goal, Bierm\'e, Meerschaert and Scheffler \cite{BMS} introduced operator scaling random fields and provided the existence of two different classes of such $\alpha$-stable ($0<\alpha \leq 2$) fields given by harmonizable as well as moving average stochastic integral representations. In the Gaussian case $\alpha=2$ they showed that the moving average and harmonizable fields have the same kind of regularity properties including results about H\"older continuity and fractal dimensions. Bierm\'e and Lacaux \cite{BL} showed that this is no more true in the stable case $\alpha \in (0,2)$.

By defining stochastic integral representations for random vectors and following the outline in \cite{BMS}, Li and Xiao \cite{LiXiao} established the existence of multivariate operator-self-similar $\alpha$-stable random fields satisfying the scaling relation \eqref{OSS}. Furthermore, they mention that from both theoretical and applied point of view it would be interesting to study their sample path regularity and fractal poperties. The purpose of this paper is to provide the related results in the Gaussian case $\alpha=2$ and to generalize several results in the literature including \eqref{specialdim} and (1.5).

A main tool for the study of sample paths of multivariate $(E,D)$-operator-self-similar Gaussian random fields is the change to polar coordinates with respect to the matrix $E$ introduced in \cite{MeersScheff} and used in \cite{BMS, BL}. If $X$ is a multivariate $(E,D)$-operator-self-similar Gaussian random field with stationary increments and $X(0) = 0$ almost surely, using \eqref{OSS} we can write the covariance matrix of $X$ as
$$ \operatorname{Cov} [ X(t) ] = \tau_E (t)^D \cov [ X\big( l_E(t) \big) ] \tau_E(t)^{D^*}$$
for all $t \in \rd$, where $D^*$ is the adjoint of $D$, $\tau_E(t)$ is the radial part of $t$ with respect to $E$ and $l_E(t)$ is its directional part, as introduced in \cite{BMS}. We will recall the precise definition of these generalized polar coordinates in Section 3. Therefore, many sample path properties depend on the polar coordinates $\big( \tau_E(t), l_E(t) \big)$ and the real parts of the eigenvalues of the linear operator $D$. The radial part $\tau_E(t)$ can be considered as a function. In particular, Lemma 2.2 in \cite{BMS} shows that $\tau_E(t-s)$ can be regarded as a quasi-metric on $\rd$ (see e.g. \cite{Stempak} for a definition) and it has been used extensively to study operator scaling random fields (see \cite{BMS, BL, BL2, LiWangXiao}). 

Furthermore, to investigate the sample paths of multivariate $(E,D)$-operator-self-similar random fields another main tool we use is the spectral decomposition of $\rd$ with respect to $E$ introduced in \cite[Section 2.1]{MeersScheff}. Actually, it turns out to be necessary in order to formulate our results if the space dimension $m$ satisfies $m \geq 2$.

In Section 2 we recall the definition of moving average and harmonizable multivariate $(E,D)$-operator-self-similar Gaussian random fields. Section 3 is devoted to the main tools we need for the study of these fields. More precisely, we recall the definition and some well-known results about the polar coordinates and the spectral decomposition with respect to $E$. Based on this tools we furthermore present and prove a general Lemma which will be needed in order to determine an upper bound for the Hausdorff dimension of the range and the graph. Finally, in Section 4 we state and prove our main results on multivariate $(E,D)$-operator-self-similar Gaussian random fields, including sample path continuity and Hausdorff dimensions. It will be clear that the methods used in \cite{MasonXiao, BMS, AychXiao} play important roles in this paper.

\section{Moving average and harmonizable representation}

Let us recall the definition of moving average and harmonizable multivariate operator-self-similar Gaussian random fields given in \cite{LiXiao}. Throughout this paper, let $E \in \mathbb{R}^{d \times d}$ be a matrix with distinct positive real parts of its eigenvalues given by $0 < a_1 < \ldots < a_p$ for some $1 \leq p \leq d$ and let $D \in \mathbb{R}^{m \times m}$ be a matrix with positive real parts of its eigenvalues given by $0 < \lambda_1 \leq \lambda_2 \leq \ldots \leq \lambda_m$. Note that $\lambda_1, \ldots , \lambda_m$ are not necessarily different.

In the following let $\phi : \rd \to [0, \infty )$ be an $E$-homogeneous $(\beta , E)$-admissible function according to Definition 2.6 and Definition 2.7 in \cite{BMS}. Recall that a function $f: \rd \to \mathbb{C}$ is called $E$-homogeneous if $f (c^{E} x) = c f(x)$ for all $c>0$ and $x \in \rd \setminus \{0\}$, whereas a function $g: \rd \to [0, \infty) $ is called $(\beta, E)$-admissible with $\beta >0$ if $g(x) > 0$ for all $x \neq 0$ and for any $0 < A < B$ there exists a positive constant $C > 0$ such that for $A \leq \|y\| \leq B$
$$ \tau_E (x) \leq 1 \Rightarrow |g(x+y)-g(y) | \leq  \tau_E (x)^\beta,$$
where $\tau_E(x)$ is the radius of $x$ with respect to $E$ introduced in Section 3 below. Various examples of such functions have been constructed in \cite{BMS, BL}. Since $\phi$ is $(\beta , E)$-admissible, Remark 2.9 in \cite{BMS} implies that $0 < \beta \leq a_1$.

Let $M_2 (d \xi)$ be an $\rmm$-valued symmetric Gaussian random measure on $\rd$ with Lebesgue control measure. Such a measure has been defined in \cite[Definition 2.1]{LiXiao} and is the generalization of the random measure introduced in \cite[p. 121]{SamorodTaqq} to higher space dimensions. Furthermore, let $q= \operatorname{trace} (E)$ and $I_m$ the identity operator on $\rmm$.

\begin{theorem}\label{movingaverage} \cite[Theorem 2.5]{LiXiao}
If $\lambda_m < \beta$, then the random field
\begin{equation}\label{mov}
X_{\phi} (x) = \int_{\rd} [ \phi(x-y)^{D-\frac{qI_m}{2}} - \phi(-y)^{D-\frac{qI_m}{2}} ] M_{2} (d y) , \quad x \in \rd
\end{equation}
is well defined and called moving average $(E,D)$-operator-self-similar random field.
\end{theorem}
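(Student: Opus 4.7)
The strategy is to reduce well-definedness to the integrability condition
$$
\int_{\rd} \bigl\| \phi(x-y)^{D-\frac{qI_m}{2}} - \phi(-y)^{D-\frac{qI_m}{2}} \bigr\|^2\, dy < \infty
$$
for each fixed $x \in \rd$, where $\|\cdot\|$ denotes any matrix norm. By the construction of the $\rmm$-valued symmetric Gaussian random measure $M_2$ with Lebesgue control measure in \cite[Definition 2.1]{LiXiao}, this $L^2$-integrability of the deterministic matrix-valued kernel is exactly what is needed for the component-wise stochastic integrals to exist and produce a genuine Gaussian vector $X_\phi(x) \in \rmm$ with finite covariance matrix.

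The analysis rests on four tools. First, setting $A := D - \tfrac{q}{2} I_m$, the Jordan decomposition of $D$ yields, for every $\varepsilon > 0$, a constant $C(\varepsilon)$ with $\|c^{A}\| \le C\, c^{\lambda_1 - q/2 - \varepsilon}$ for $0 < c \le 1$ and $\|c^{A}\| \le C\, c^{\lambda_m - q/2 + \varepsilon}$ for $c \ge 1$. Second, $E$-homogeneity of $\phi$ combined with its continuity and strict positivity on the compact unit sphere $\{l : \tau_E(l) = 1\}$ gives $c_1 \tau_E(y) \le \phi(y) \le c_2 \tau_E(y)$ for all $y \neq 0$. Third, the polar change of variables with respect to $E$ (Proposition 2.3 of \cite{BMS}) factorises Lebesgue measure as $dy = \tau^{q-1}\, d\tau\, d\sigma(l)$ for a finite measure $\sigma$ on the unit sphere. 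Fourth, the $(\beta,E)$-admissibility of $\phi$ supplies the key difference estimate used in the far-field region.

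I would then split the integral along the sphere $\tau_E(y) = 2\tau_E(x)$. On the \emph{inner} region $\tau_E(y) \le 2\tau_E(x)$, I use $\|\cdot\|^2 \le 2\|\phi(x-y)^A\|^2 + 2\|\phi(-y)^A\|^2$, bound each term pointwise by $C\tau_E(y)^{2\lambda_1 - q - 2\varepsilon}$ (respectively $C\tau_E(x-y)^{2\lambda_1 - q - 2\varepsilon}$) near the singularities $y=0$ and $y=x$, and pass to polar coordinates to get an integral of type $\int_0^{c\tau_E(x)} \tau^{2\lambda_1 - 1 - 2\varepsilon}\, d\tau$, which is finite because $\lambda_1 > 0$. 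On the \emph{outer} region $\tau_E(y) > 2\tau_E(x)$, I set $c = \tau_E(y)$, $l = l_E(y)$ and factor via $E$-homogeneity as
$$
\phi(x-y)^{A} - \phi(-y)^{A} = c^{A}\bigl[\phi(c^{-E}x - l)^{A} - \phi(-l)^{A}\bigr].
$$
Since $\phi$ is uniformly bounded away from $0$ and $\infty$ on the unit sphere and $s \mapsto s^A$ is Lipschitz on compact subintervals of $(0,\infty)$, admissibility applied to $\phi(\cdot - l) - \phi(-l)$ with $\tau_E(c^{-E}x) = c^{-1}\tau_E(x) \le \tfrac{1}{2}$ gives the bracketed term a bound $C_x c^{-\beta}\tau_E(x)^{\beta}$, and combining with $\|c^{A}\| \le C c^{\lambda_m - q/2 + \varepsilon}$ controls the squared integrand by a constant times $\tau^{2\lambda_m - q - 2\beta + 2\varepsilon}$. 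Polar coordinates reduce convergence at infinity to $\int^{\infty} \tau^{2(\lambda_m - \beta) - 1 + 2\varepsilon}\, d\tau < \infty$, which holds precisely because $\lambda_m < \beta$ upon choosing $\varepsilon < \beta - \lambda_m$.

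The main obstacle I anticipate is at the matrix level: one must carry the logarithmic factors from Jordan blocks of $D$ through the estimates for $\|c^A\|$, and one must ensure that both the Lipschitz constant for $s \mapsto s^A$ on the relevant compact intervals and the admissibility constant in the difference estimate are \emph{uniform in the angular variable} $l$ as $l$ ranges over the unit sphere; the spectral decomposition of $\rd$ with respect to $E$ referenced in the introduction is well-suited to this uniformity argument. Once this bookkeeping is settled, the two exponent computations above identify $\lambda_m < \beta$ as the sharp condition for integrability at infinity, paralleling the scalar argument in \cite[Theorem 3.1]{BMS}.
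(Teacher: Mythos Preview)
Your outline is correct and follows the standard route: reduce to the $L^2$ integrability of the kernel matrix, split into a near-field region where each term is integrated separately using $\lambda_1>0$, and a far-field region where the $E$-homogeneous factoring together with $(\beta,E)$-admissibility and the bound $\|c^A\|\le C c^{\lambda_m-q/2+\varepsilon}$ for $c\ge 1$ gives convergence at infinity precisely when $\lambda_m<\beta$. The bookkeeping you flag (logarithmic Jordan factors absorbed into the $\varepsilon$, uniformity in $l\in S_E$ via compactness of the unit sphere) is routine.

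The only point of comparison is that the paper does not actually carry out this computation: immediately after stating the theorem it simply records the integrability criterion $\int_{\rd}\|Q(x,y)\|_m^2\,dy<\infty$ and cites \cite[Theorem 2.3]{LiXiao} for the existence, referring to \cite[Section 4]{LiXiao} for the details of the construction. What you have sketched is essentially the matrix-valued adaptation of \cite[Theorem 3.1]{BMS} that one finds in that reference. So your proposal is not a different approach from the paper's---it is the proof the paper defers to the literature.
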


For the sake of simplicity let us denote the kernel matrix by
$$Q(x,y) = [ \phi(x-y)^{D-\frac{qI_m}{2}} - \phi(-y)^{D-\frac{qI_m}{2}} ]$$
and let us recall that according to \cite[Theorem 2.3]{LiXiao} $X_\phi$ exists, since
$$\int_{\rd} \| Q(x,y) \| _m^2 dy < \infty$$
for all $x \in \rd$, where $\| Q \| _m = \max_{\|u\|=1} \| Qu \|$ is the operator norm for all matrices $Q \in \mathbb{R}^{m \times m}$. See \cite[Section 4]{LiXiao} for details on the construction.

Li and Xiao \cite[Theorem 2.4]{LiXiao} also define stochastic integrals of complex matrix-valued functions with respect to a $\mathbb{C}^m$-valued symmetric Gaussian random measure, denoted $W_2 (dy)$, which is more general than the stochastic integrals defined in \cite[p. 281]{SamorodTaqq}. Let $E^*$ be the adjoint of $E$ and suppose now that $\psi : \rd \to [0, \infty)$ is a continuous $E^*$-homogeneous function such that $\psi(x) \neq 0$ for $x \neq 0$.

\begin{theorem}\label{harmonizable} \cite[Theorem 2.6]{LiXiao}
If $\lambda_m < a_1$ the random field
\begin{equation}\label{harmon}
X_{\psi} (x) = \operatorname{Re} \int_{\rd} ( e^{i \langle x, y \rangle} -1 ) \psi (y) ^{-D - \frac{q I_m}{2}} W_{2} (d y) , \quad x \in \rd
\end{equation}
is well defined and called harmonizable $(E,D)$-operator-self-similar random field.
\end{theorem}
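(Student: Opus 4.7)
The plan is to verify the integrability condition that characterizes well-definedness of stochastic integrals against $W_2(dy)$ as developed in \cite[Theorem 2.4]{LiXiao}, namely
\[
I(x) := \int_{\rd} \bigl\| (e^{i\langle x, y\rangle} - 1)\, \psi(y)^{-D - \frac{qI_m}{2}} \bigr\|_m^2 \, dy < \infty \quad \text{for every } x \in \rd.
\]
Once this is established, the real part of the resulting complex Gaussian integral is automatically a well-defined real-valued Gaussian random variable, and the operator-self-similarity of $X_\psi$ follows from the $E^*$-homogeneity of $\psi$ in the usual way.

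First I would pass to polar coordinates with respect to $E^*$, which will be recalled in Section 3, writing $y = \tau^{E^*}\theta$ with $\tau = \tau_{E^*}(y) > 0$ and $\theta = l_{E^*}(y)$ in the compact unit sphere $S_{E^*}$, so that $dy = \tau^{q-1}\,d\tau\,\sigma(d\theta)$ for a finite measure $\sigma$ on $S_{E^*}$. Since $\psi$ is $E^*$-homogeneous we have $\psi(\tau^{E^*}\theta) = \tau\,\psi(\theta)$, hence $\psi(y)^{-D - qI_m/2} = \tau^{-D - qI_m/2}\, \psi(\theta)^{-D - qI_m/2}$. Because $\psi$ is continuous and strictly positive on $S_{E^*}$, the angular factor $\|\psi(\theta)^{-D - qI_m/2}\|_m$ is uniformly bounded, so $I(x)$ is controlled by
\[
\int_0^\infty \!\! \int_{S_{E^*}} \!\! \bigl|e^{i\langle x, \tau^{E^*}\theta\rangle} - 1\bigr|^2 \, \|\tau^{-D - qI_m/2}\|_m^2 \, \tau^{q-1}\, d\tau\, \sigma(d\theta).
\]

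Next I would split the radial integral into $\{\tau \leq 1\}$ and $\{\tau > 1\}$ and apply the standard spectral bound that, for any real matrix $A$ and any $\epsilon > 0$, one has $\|\tau^A\|_m \lesssim \tau^{\mu_{\min}(A) - \epsilon}$ on $(0,1]$ and $\|\tau^A\|_m \lesssim \tau^{\mu_{\max}(A) + \epsilon}$ on $[1,\infty)$, where $\mu_{\min}(A)$ and $\mu_{\max}(A)$ denote the smallest and largest real parts of the eigenvalues of $A$. On $\{\tau > 1\}$, bounding $|e^{i\langle x,y\rangle} - 1| \leq 2$ and using $\mu_{\max}(-D - qI_m/2) = -\lambda_1 - q/2$ yields an integrand of order $\tau^{-2\lambda_1 - 1 + 2\epsilon}$, which is integrable at infinity since $\lambda_1 > 0$. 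On $\{\tau \leq 1\}$, the sharper estimate $|e^{i\langle x,y\rangle} - 1| \leq \|x\|\|y\| \lesssim \|x\|\,\tau^{a_1 - \epsilon}$ (applying the spectral bound to $E^*$, whose smallest eigenvalue has real part $a_1$), combined with $\mu_{\min}(-D - qI_m/2) = -\lambda_m - q/2$, produces an integrand of order $\|x\|^2\,\tau^{2a_1 - 2\lambda_m - 1 - 4\epsilon}$. Integrability at $0$ then requires $a_1 > \lambda_m + 2\epsilon$, which is ensured by the hypothesis $\lambda_m < a_1$ upon choosing $\epsilon$ small enough.

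The main technical point is to apply the spectral estimates for $\|\tau^A\|_m$ uniformly in $\theta \in S_{E^*}$ and to recognize that the hypothesis $\lambda_m < a_1$ is exactly what is needed to kill the near-origin singularity; the behavior at infinity is harmless because $\lambda_1 > 0$. Once these estimates are in place, everything else is bookkeeping, and the sharpness of the stated hypothesis emerges transparently from the near-zero integral.
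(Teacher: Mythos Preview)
Your argument is correct. The paper itself does not give a proof of this theorem: it simply records the integrability criterion
\[
\int_{\rd} \bigl( |1-\cos \langle x,y \rangle |^2 + |\sin \langle x,y \rangle |^2 \bigr)\, \| \psi (y) ^{-D - \frac{q I_m}{2}} \|_m^2 \, dy < \infty
\]
and defers entirely to \cite[Theorem~2.6]{LiXiao} for its verification (and to Theorems~2.5--2.6 there for the scaling property and stationary increments). What you have written is precisely the standard verification of that criterion---polar coordinates with respect to $E^*$, the spectral bounds on $\|\tau^A\|_m$, and the split at $\tau=1$---and it correctly isolates $\lambda_m<a_1$ as the condition governing integrability near the origin, while $\lambda_1>0$ handles infinity. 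So your proposal supplies the details the paper omits, by the same route one would find in the cited reference.
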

As in the above $X_\psi$ is well defined since the kernel matrix in \eqref{harmon} satisfies
$$\int_{\rd} ( |1-\cos \langle x,y \rangle |^2 + |\sin \langle x,y \rangle |^2 ) \| \psi (y) ^{-D - \frac{q I_m}{2}} \| _m^2 dy < \infty$$
for all $x \in \rd$.

From Theorem 2.5 and Theorem 2.6 in \cite{LiXiao} the random fields given in \eqref{mov} and \eqref{harmon} are stochastically continuous, have stationary increments and satisfy the scaling property \eqref{OSS}. In addition, it is shown that $X_\psi$ is proper, whereas $X_\phi$ is proper if $\frac{q}{2}$ is not an eigenvalue of $D$ (see \cite[Remark 2.1]{LiXiao}). Let us recall that an $\rmm$-valued random field $\{ Y(t) : t \in \rd\}$ is said to be proper if for every $t \in \rd$ the distribution of $Y(t)$ is full, i.e. it is not contained in any proper hyperplane in $\rmm$. In the Gaussian case it is well known that the latter is equivalent to $\det \cov [ Y(t)] >0$. For the sake of simplicity we will always assume that $\frac{q}{2}$ is not an eigenvalue of $D$, i.e. that $X_\phi$ is proper as well. Furthermore, under the assumptions of Theorem \ref{movingaverage} and Theorem \ref{harmonizable} up to considering the matrix $\frac{E}{H}$ for some $H \in (\lambda_m, a_1)$ without loss of generality we will assume that
\begin{equation}\label{realparts}
0<\lambda_1 \leq \lambda_2 \leq \ldots \leq \lambda_m <1<a_1 < \ldots < a_p .
\end{equation}

The main tool in the study of the fields given in \eqref{mov} and \eqref{harmon} is the change of coordinates to the polar coordinates with respect to the matrix $E$. Therefore, before studying the sample paths of $X_\phi$ and $X_\psi$, we recall in the next Section the definition of these polar coordinates, give some estimates on the radial part and recall a spectral decomposition result from \cite{MeersScheff}. This will be needed in order to introduce a lemma in Section 4 given in terms of the radial part and the spectral decomposition from which we will obtain an a.s. upper bound for the Hausdorff dimension of the range and the graph of $X_\phi$ and $X_\psi$ over $[0,1]^d$.

\section{Polar coordinates and spectral decomposition}

Following \cite[Section 2.1]{MeersScheff} we now construct the spectral decomposition of $\rd$ with respect to $E$. See \cite[Section 3]{BL} for a different way of construction. Since the real parts of the eigenvalues of $E$ satisfy $1<a_1 < \ldots < a_p$ for some $p \leq d$ we can factor the minimal polynomial of $E$ into $f_1, \ldots, f_p$, where all roots of $f_i$ have real part equal to $a_i$. We further define $W_{i}=\Ker\big( f_{i}(E) \big)$. Then, by \cite[Theorem 2.1.14]{MeersScheff},
$$\rd=W_{1}\oplus\ldots\oplus W_{p}$$
is a direct sum decomposition, i.e. we can write any $x \in \mathbb{R}^d$ uniquely as
$$ x = x_1 + \ldots + x_p$$
for $x_i \in W_i$, $1 \leq i \leq p$. Further, we can choose an inner product on $\rd$ such that the subspaces $W_1, \ldots, W_p$ are mutually orthogonal and, throughout this paper, for any $x \in \rd$ we will choose $ \| x \| = \langle x,x\rangle^{1/2}$ as the Euclidean norm.

We now recall results about the change to polar coordinates which have already been used widely in the study of operator scaling random fields (see \cite{BMS, BL, LiWangXiao, BL2, LiXiao}). According to \cite[Section 2]{BMS} there exists a norm $\| \cdot \|_E$ on $\rd$ such that for the unit sphere $S_E = \{ x \in \rd : \|x \|_E = 1 \}$ the mapping $\Psi_E : (0, \infty) \times S_E \to \rd  \setminus \{0 \}$ defined by $\Psi_E (r, \theta) = r^E \theta$ is a homeomorphism. Thus, we can write any $x \in \rd  \setminus \{0 \}$ uniquely as
$$ x = \tau_E (x)^E l_E (x),$$
where $\tau_E (x) >0$ is called the radius of $x$ with respect to $E$ and $l_E (x) \in S_E= \{ x \in \rd : \tau_E (x) = 1 \}$ is called the direction. It is clear that $\tau_E (x) \rightarrow \infty$ as $\| x\| \rightarrow \infty$ and $\tau_E (x) \rightarrow 0$ as $\| x\| \rightarrow 0$. Further, $\tau_E( \cdot )$ is continuous on $\rd  \setminus \{0 \}$ and can be extended continuously to the whole space by setting $\tau_E(0) = 0$.

The following result gives bounds on the growth rate of $\tau_E( x )$ in terms of $a_1, \ldots, a_p$ and can easily be deduced from \cite[Lemma 2.1]{LiWangXiao} and \cite[Lemma 2.2]{LiWangXiao}. We refer to \cite{BMS,BL} for more refined results on the bounds.

\begin{lemma}\label{pcbounds}
Let $H>0$ and $\rd = W_1 \oplus \ldots \oplus W_p$ the direct sum decomposition with respect to $E$. For any $1 \leq k \leq p$ define $O_k = W_1 \oplus \ldots \oplus W_k$. For any $\varepsilon >0$ small enough and any $\eta \in (0,1)$ there exist constants $C_{1,1}, C_{1,2} >0$ such that
$$ C_{1,1} \Big( \| y \|^{\frac{H+\varepsilon}{a_1}} + \sum_{i=k+1}^p \| x_i \| ^{\frac{H+\varepsilon}{a_i}} \Big) \leq \tau_E(x)^H \leq   C_{1,2} \Big( \| y\|^{\frac{H-\varepsilon}{a_k}} + \sum_{i=k+1}^p \| x_i \| ^{\frac{H-\varepsilon}{a_i}} \Big),$$
where $x = y + \sum_{i=k+1}^p x_i$ with $ \| x\| \leq \eta$, $y\in O_k$ and $x_i \in W_i$, $k+1 \leq i \leq p$.
\end{lemma}
Using the spectral decomposition and the estimates given in Lemma \ref{pcbounds} we state and prove our results on the modulus of continuity of $X_\phi$ and $X_\psi$ and determine the Hausdorff dimension of their sample paths in the next section.

\section{Modulus of continuity and Hausdorff dimension}

Throughout this section let $X= \{X(t) : t \in \rd \}$ be a proper, Gaussian and $(E,D)$-operator-self-similar random field in $\rmm$ with stationary increments. Examples of such random fields are given by the moving average and harmonizable random fields introduced in Section 2. In order to formulate our results conveniently let us define $\tilde{a}_j = a_{p+1-j}$ and $\tilde{W}_j = W_{p+1-j}$ for $j = 1, \ldots, p,$ where $W_1, \ldots, W_p$  is the spectral decomposition according to Section 3. Note that
$$\tilde{a}_1 > \ldots > \tilde{a}_p .$$
We now state our main results and refer the reader to \cite{Fal, Mattila} for the definition and properties of the Hausdorff dimension.

\begin{theorem}\label{hausdorff}
Assume that \eqref{realparts} holds. Then with probabilty one
\begin{align}\label{imdim}
	\dim_{\mathcal{H}} X ([0,1]^d) &= \min \Big\{ m, \frac{ \sum_{k=1}^p a_k \dim W_k + \sum_{i=1}^j (\lambda_j - \lambda_i) }{ \lambda_j }, 1 \leq j \leq m \Big\}
\end{align}

\begin{align}\label{imdim2}
	 &= \begin{cases}		
				 m & \mbox{if } \sum_{i=1}^m \lambda_i < \sum_{k=1}^p a_k \dim W_k ,\\ 				 \frac{ \sum_{k=1}^p a_k \dim W_k + \sum_{i=1}^l (\lambda_l - \lambda_i) }{ \lambda_l }  & \mbox{if }  \sum_{i=1}^{l-1} \lambda_i < \sum_{k=1}^p a_k \dim W_k \leq \sum_{i=1}^{l} \lambda_i ,		
		\end{cases}
\end{align}

\begin{align}\label{graphdim}
	\dim_{\mathcal{H}} \Gr X ([0,1]^d) &= \min \Bigg\{ \frac{ \sum_{k=1}^p a_k \dim W_k + \sum_{i=1}^j (\lambda_j - \lambda_i) }{ \lambda_j }, 1 \leq j \leq m, \\ 
	& \quad \sum_{j=1}^l \frac{\tilde{a}_j}{\tilde{a}_l} \dim \tilde{W}_j + \sum_{j=l+1}^p \dim \tilde{W}_j + \sum_{i=1}^m (1- \frac{\lambda_i}{\tilde{a}_l}) , 1 \leq l \leq p \Bigg\} \nonumber
\end{align}
\begin{footnotesize}
\begin{align}\label{graphdim2}
	 &= \begin{cases} 
			\begin{aligned}
				& \dim_{\mathcal{H}} X ([0,1]^d) \qquad\qquad\qquad\qquad\qquad\qquad\,\,\, \mbox{ if } \sum_{k=1}^p a_k \dim W_k \leq \sum_{i=1}^m \lambda_i  ,\\ 		
		&   \sum_{j=1}^l \frac{\tilde{a}_j}{\tilde{a}_l} \dim \tilde{W}_j + \sum_{j=l+1}^p \dim \tilde{W}_j + \sum_{i=1}^m (1- \frac{\lambda_i}{\tilde{a}_l}) \mbox{ if } \sum_{k=1}^{l-1} \tilde{a}_k \dim \tilde{W}_k \leq \sum_{i=1}^m \lambda_i < \sum_{k=1}^{l} \tilde{a}_k \dim \tilde{W}_k .
			\end{aligned}
		\end{cases}
\end{align}
\end{footnotesize}
\end{theorem}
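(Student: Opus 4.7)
My plan is the standard sandwich: establish matching upper and lower bounds for $\dimh X([0,1]^d)$ and $\dimh \Gr X([0,1]^d)$. The essential inputs will be the modulus-of-continuity estimate for $X$ proved immediately before this theorem, the polar-coordinate bounds from Lemma \ref{pcbounds}, the spectral decomposition $\rd = W_1 \oplus \cdots \oplus W_p$, and the covariance factorisation $\cov[X(t)] = \tau_E(t)^D \cov[X(l_E(t))] \tau_E(t)^{D^*}$. In a Jordan-form basis on $\rmm$, this last identity makes the $i$-th coordinate of $X(t)-X(s)$ a centred Gaussian with standard deviation comparable to $\tau_E(t-s)^{\lambda_i}$, up to logarithmic corrections from any Jordan blocks of $D$.

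For the upper bound, given $0<r<1$ I would partition $K=[0,1]^d$ into rectangles whose side in each $W_k$ equals $r^{a_k}$; there are $\asymp r^{-\sum_k a_k\dim W_k}$ of them, and Lemma \ref{pcbounds} yields $\tau_E(t-s)\lesssim r$ on each. The modulus of continuity then places $X(R)$ inside a product rectangle with side $r^{\lambda_i}$ in the $i$-th eigendirection of $D$. Covering such a rectangle by cubes of side $r^{\lambda_j}$ costs a factor $r^{-\sum_{i\le j}(\lambda_j-\lambda_i)}$, since the directions with $\lambda_i>\lambda_j$ already fit in a single cube. Writing the $s$-dimensional Hausdorff sum and letting $r\to 0$ forces
$$s\ge\lambda_j^{-1}\Big(\textstyle\sum_k a_k\dim W_k+\sum_{i\le j}(\lambda_j-\lambda_i)\Big)$$
for some $j$; combined with the trivial $s\le m$ this gives \eqref{imdim}. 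For the graph one repeats the argument also with cubes of side $r^{\tilde a_l}$: the time side $r^{a_k}$ already fits whenever $a_k\ge\tilde a_l$ and otherwise contributes $r^{(a_k-\tilde a_l)\dim W_k}$ extra cubes, yielding exactly the second family in \eqref{graphdim}.

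For the lower bound, by Frostman's lemma it suffices to push Lebesgue measure on $K$ forward under $X$ (resp.\ $t\mapsto(t,X(t))$) and show that its $\gamma$-energy is finite for every $\gamma$ strictly less than the asserted dimension. By Fubini this reduces to bounding
$$\int_K\!\!\int_K \Exp\bigl[|X(t)-X(s)|^{-\gamma}\bigr]\,ds\,dt\quad\text{and}\quad \int_K\!\!\int_K \Exp\bigl[(\tau_E(t-s)^2+|X(t)-X(s)|^2)^{-\gamma/2}\bigr]\,ds\,dt,$$
where in the graph integrand I have replaced $\|t-s\|$ by its polar proxy $\tau_E(t-s)$, legitimate because the two are comparable up to H\"older-type factors controlled by Lemma \ref{pcbounds}. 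An anisotropic Gaussian estimate in the Jordan basis, of the type used in \cite{MasonXiao,AychXiao}, bounds the first expectation by $\tau_E(t-s)^{-G(\gamma)}$ with a piecewise-linear exponent $G$ chosen so that $G(\gamma)<\sum_k a_k\dim W_k$ is equivalent to $\gamma<\dimh X(K)$; for the graph integrand one balances the two denominator terms according to the size of $\tau_E(t-s)$ and obtains a similar threshold. Converting these pointwise bounds into convergence of the outer integrals via Lemma \ref{pcbounds}, which factorises $d$-dimensional Lebesgue measure into a product over the $W_k$, yields the matching lower bounds.

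The main obstacle I expect is the graph lower bound in the genuinely anisotropic regime $\sum_k a_k\dim W_k>\sum_i\lambda_i$, where the correct exponent involves the $\tilde a_l$-weighted sums of \eqref{graphdim2} rather than the naive Euclidean metric on $\rd$. Choosing the right index $l$ via the dichotomy $\sum_{k<l}\tilde a_k\dim\tilde W_k\le \sum_i\lambda_i<\sum_{k\le l}\tilde a_k\dim\tilde W_k$, and then balancing the anisotropic Gaussian estimate against the two-term denominator, is where the combinatorics of the spectral decomposition interact most delicately with the Jordan structure of $D$. Once the two dimension formulas \eqref{imdim} and \eqref{graphdim} are in place, the equivalent forms \eqref{imdim2} and \eqref{graphdim2} are purely combinatorial monotonicity rewrites identifying the minimising $j$ or $l$.
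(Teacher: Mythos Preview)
Your overall architecture matches the paper's exactly: reduce to $D$ in real canonical form, get upper bounds from the anisotropic covering argument (this is Lemma~\ref{upperbound} in the paper, which you have essentially reproved inline), and get lower bounds via Frostman by showing finiteness of the relevant energy integrals using the polar--coordinate bounds and iterated one-dimensional integration. The range case goes through as you describe.

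The one genuine gap is in the graph lower bound. Your proposed replacement of $\|t-s\|$ by $\tau_E(t-s)$ in the energy integrand is \emph{not} legitimate: Lemma~\ref{pcbounds} does not say these are comparable, only that $\tau_E(x)$ is comparable to $\sum_k\|x_k\|^{1/a_k}$ up to $\varepsilon$-perturbed exponents, which is a very different thing from $\|x\|$ itself. If you made that substitution the resulting integral would give the wrong threshold (it would collapse the second family of candidates in \eqref{graphdim} to something depending only on $\sum_k a_k\dim W_k$). The paper keeps $\|x-y\|$ and $\tau_E(x-y)$ separate: after integrating out the Gaussian it arrives at
\[
\mathbb{E}\bigl[(\|x-y\|^2+\|X(x)-X(y)\|^2)^{-\gamma/2}\bigr]\le c\,\tau_E(x-y)^{-\sum_i\lambda_i}\,\|x-y\|^{-(\gamma-m)},
\]
then decomposes $x-y$ along the $\tilde W_j$ and integrates iteratively using a dedicated two-weight lemma (Lemma~\ref{2lbound}) of the form
\[
\int_0^2 \frac{r^{n-1}}{(A+r^\alpha)^\beta(B+r)^\eta}\,dr,
\]
with three separate regimes $\alpha\beta\gtrless n$. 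It is precisely the interaction of the $\tau_E$-power and the Euclidean power through this lemma that produces the $\tilde a_l$-weighted formula in \eqref{graphdim2}; your ``balance the two denominator terms'' intuition is correct, but it has to be implemented with both metrics present, not with one substituted for the other.
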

\begin{remark}
In \eqref{realparts} we made use of the fact that the matrices $E$ and $D$ are in general not unique. However, Theorem \ref{hausdorff} shows that the quotient of the real parts of the eigenvalues of $E$ and $D$ is always unique. Moreover note that $\sum_{k=1}^p a_k \dim W_k = q = \trace (E)$ so that the Hausdorff dimension of the range only depends on the trace of $E$.
\end{remark}
We remark that in the special case $E=I_d$, where $I_d$ is the identity operator in $\rd$, Theorem \ref{hausdorff} coincides with \eqref{specialdim} and (1.5). Furthermore, Theorem \ref{hausdorff} generalizes several other results, such as \cite[Theorem 5.6]{BMS} and the Hausdorff dimension results stated in \cite[Section 3]{LiWangXiao} about $(d,m)$-random fields where the components are independent copies of the operator scaling Gaussian random fields constructed in \cite{BMS}.

Equalities \eqref{imdim2} and \eqref{graphdim2} are verified by Lemma \ref{cases} in the Appendix whose proof is elementary and omitted.

The following lemma is a generalization of \cite[Lemma 2.1]{Xiao3} and will be needed in order to establish the upper bounds in Theorem \ref{hausdorff}. Furthermore, it enlightens the fact that upper bounds of the Hausdorff dimension of the range and graph of any function satisfying a generalized H\"older continuity with respect to the radius $\tau_E ( \cdot )$ are related to the spectrum of $E$, since an efficient covering of the aforementioned sets can be constructed in terms of the spectral decomposition with respect to $E$.

\begin{lemma}\label{upperbound}
Let $f = (f_1, \ldots, f_m) : [0,1]^d \to \rmm$ satisfy
\begin{align}\label{fbound}
|f_i(x) - f_i(y)| \leq c \, \tau_E (x-y)^{\alpha_i} , \quad i=1, \ldots, m,
\end{align}
where $c>0$ and
$$0<\alpha_1 \leq \alpha_2 \leq \ldots \leq \alpha_m \leq 1.$$
Then
\begin{align}
\dim_{\mathcal{H}} f ([0,1]^d) &\leq \min \Big\{ m, \frac{ \sum_{k=1}^p a_k \dim W_k + \sum_{i=1}^j (\alpha_j - \alpha_i) }{ \alpha_j }, 1 \leq j \leq m \Big\}, \\
\dim_{\mathcal{H}} \Gr f ([0,1]^d) &\leq \min \Bigg \{\frac{ \sum_{k=1}^p a_k \dim W_k + \sum_{i=1}^j (\alpha_j - \alpha_i) }{ \alpha_j }, 1 \leq j \leq m, \\ 
	& \quad \sum_{j=1}^l \frac{\tilde{a}_j}{\tilde{a}_l} \dim \tilde{W}_j + \sum_{j=l+1}^p \dim \tilde{W}_j + \sum_{i=1}^m (1- \frac{\alpha_i}{\tilde{a}_l}) , 1 \leq l \leq p \Bigg\}. \nonumber
\end{align}
\end{lemma}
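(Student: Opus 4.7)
I would run a classical anisotropic box-counting argument adapted to the quasi-metric $\tau_E$ and to the spectral decomposition $\rd=W_1\oplus\dots\oplus W_p$. The plan is to cover $[0,1]^d$ by rectangles whose $\tau_E$-diameter is at most $r$, propagate the H\"older bound \eqref{fbound} through $f$ to obtain a highly anisotropic ``image box'' in $\rmm$ for each rectangle, and then re-cover the image (resp.\ the graph) by Euclidean cubes of a carefully chosen side length.

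First I would use the upper estimate in Lemma \ref{pcbounds} with $H=1$: fixing a small $\varepsilon>0$ and setting $\rho_k:=r^{a_k/(1-a_k\varepsilon)}$, any $x=x_1+\dots+x_p$ with $\|x_k\|\le\rho_k$ satisfies $\tau_E(x)\le Cr$. Product-covering the $W_k$-projections of $[0,1]^d$ then yields a family $\mathcal{R}(r)$ of rectangles with $\tau_E$-diameter $\le C'r$ and cardinality
\[\#\mathcal{R}(r)\;\le\;C''\prod_{k=1}^p \rho_k^{-\dim W_k}\;\le\;C''' r^{-\sum_{k=1}^p a_k\dim W_k\,-\,\eta(\varepsilon)},\]
where $\eta(\varepsilon)\to 0$ as $\varepsilon\to 0$. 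On each $R\in\mathcal{R}(r)$, \eqref{fbound} implies that $f(R)$ is contained in an $\rmm$-box with side $\lesssim r^{\alpha_i}$ in the $i$-th coordinate direction.

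Next I would perform the counting twice. For the image bound, fix $j\in\{1,\dots,m\}$ and re-cover each $f(R)$ by cubes of side $r^{\alpha_j}$: since $\alpha_i\le\alpha_j$ iff $i\le j$, this costs $\prod_{i=1}^j r^{\alpha_i-\alpha_j}=r^{-\sum_{i=1}^j(\alpha_j-\alpha_i)}$ cubes per $R$. Multiplying by $\#\mathcal{R}(r)$, feeding into the standard box-counting criterion $\mathcal{H}^s\le\liminf N(\delta)\delta^s$ with $\delta=r^{\alpha_j}$, and combining with the trivial bound $\dim_{\mathcal H}f([0,1]^d)\le m$, produces the first inequality after sending $\varepsilon\to 0$. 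For the graph bound I keep the same rectangles but cover the piece $\{(x,f(x)):x\in R\}\subset R\times f(R)$ of $\Gr f$ by cubes of two kinds of side length: (i)\,$\delta=r^{\alpha_j}$, for which the spatial $W_k$-factors cost one cube each (since $a_k>1\ge\alpha_j$) and the image factors reproduce the count above, yielding the $\alpha_j$-denominated terms; (ii)\,$\delta=r^{\tilde a_l}$, for which the $\tilde W_j$-factors cost one cube for $j\le l$ and $r^{(\tilde a_j-\tilde a_l)\dim\tilde W_j}$ for $j>l$, while every image factor costs $r^{\alpha_i-\tilde a_l}$ because $\alpha_i\le 1<\tilde a_l$. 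Aggregating the exponents and dividing by $\tilde a_l$ yields exactly
\[\sum_{j=1}^l\frac{\tilde a_j}{\tilde a_l}\dim\tilde W_j+\sum_{j=l+1}^p\dim\tilde W_j+\sum_{i=1}^m\Bigl(1-\frac{\alpha_i}{\tilde a_l}\Bigr).\]
Taking the minimum over $j$ and $l$ and letting $\varepsilon\to 0$ finishes the argument.

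The main technical nuisance is carrying the $\varepsilon$-slack from Lemma \ref{pcbounds} through every count and verifying that the exponents converge to the clean values stated in the lemma; the rest is mechanical bookkeeping once the right rectangles and cube sizes have been chosen.
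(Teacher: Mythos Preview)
Your proposal is correct and follows essentially the same strategy as the paper: both arguments cover $[0,1]^d$ by rectangles adapted to the spectral decomposition (with side $\asymp r^{a_k}$ in the $W_k$-direction), push these through \eqref{fbound} and Lemma~\ref{pcbounds} to obtain anisotropic image boxes, and then re-cover by cubes of side $r^{\alpha_j}$ (for the $\alpha_j$-terms) or $r^{\tilde a_l}$ (for the $\tilde a_l$-terms) before reading off the Hausdorff dimension from the resulting counts. The only cosmetic differences are that the paper parametrizes by an integer $n$ with edge-lengths $n^{-a_l}$ rather than by $r$, and for the $\alpha_j$-part of the graph bound it uses the diameter of $R\times T$ directly instead of re-covering by cubes; neither changes the substance.
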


\begin{proof}
Throughout this proof, let $c$ be an unspecified constant which might change in every occurence. We clearly have $\dim_{\mathcal{H}} f ([0,1]^d) \leq m$ and $\dim_{\mathcal{H}} f ([0,1]^d) \leq \dim_{\mathcal{H}} \Gr f ([0,1]^d)$. So it suffices to prove (4.7). Let $\rd=W_{1}\oplus\ldots\oplus W_{p}$ be the direct sum decomposition with respect to $E$ according to Section 3. We first show that
\begin{align}\label{lgbound}
\dim_{\mathcal{H}} \Gr f ([0,1]^d) &\leq \frac{ \sum_{k=1}^p a_k \dim W_k + \sum_{i=1}^j (\alpha_j - \alpha_i) }{ \alpha_j }
\end{align}
for every $1 \leq j \leq m$. We can choose compact subsets $V_1 \subset W_1, \ldots, V_p \subset W_p$ such that
$$[0,1]^d \subset V_1 + \ldots + V_p,$$
where $V_1 + \ldots + V_p = \{ x_1 + \ldots + x_p : x_i \in V_i , 1 \leq i \leq p \}$. For any integer $n \geq 2$, we divide $V_l$ $(1 \leq l \leq p)$ into $k_{n,l}$ cubes $\{ R_{n,l,i_l} \}$ $(1 \leq i_l \leq k_{n,l} )$ with edge-lengths $n^{-a_l}$. Then
\begin{align}\label{numberrectangles}
k_{n,1} \cdot \ldots \cdot k_{n,p} \leq c \, n^{\sum_{l=1}^p a_l \dim W_l}.
\end{align}
By \eqref{fbound} and Lemma \ref{pcbounds}, for any small $\varepsilon >0$, each $f(R_{n,1,i_1} + \ldots + R_{n,p,i_p})$ can be covered by a rectangle $T_{n,i_1, \ldots, i_p} \subset \rmm $ of sides with respective length $c (\frac{1}{n})^{\alpha_{i}-\varepsilon}$ for $1 \leq i \leq m$. For each fixed $1 \leq j \leq m$, $T_{n,i_1, \ldots, i_p}$ can be covered by $c (\frac{1}{n})^{\sum_{i=1}^j (\alpha_i - \alpha_j ) -\varepsilon}$ cubes $T_{n,i_1, \ldots, i_p,k}$ of edge-lengths $c (\frac{1}{n})^{\alpha_j}$. Note that
$$\dim_{\mathcal{H}} \Gr f ([0,1]^d) \subset \bigcup_{i_1, \ldots , i_p} \bigcup_{k} (R_{n,1,i_1} + \ldots + R_{n,p,i_p}) \times T_{n,i_1, \ldots, i_p,k} $$
and
\begin{align}\label{lgdiam}
\operatorname{diam} \big( (R_{n,1,i_1} + \ldots + R_{n,p,i_p}) \times T_{n,i_1, \ldots, i_p,k} \big) \leq c (\frac{1}{n})^{\alpha_j}.
\end{align}
Let $\gamma > \varepsilon$. Then, by \eqref{numberrectangles} and \eqref{lgdiam}, we have
\begin{align*}
& \sum_{i_1, \ldots, i_p} \sum_k \operatorname{diam} \big( (R_{n,1,i_1} + \ldots + R_{n,p,i_p}) \times T_{n,i_1, \ldots, i_p,k} \big)^{\big[ \gamma + \sum_{l=1}^p a_l \dim W_l + \sum_{i=1}^j (\alpha_j - \alpha_i)\big]/\alpha_j} \\
& \leq c \,k_{n,1} \cdot \ldots \cdot k_{n,p} \cdot (\frac{1}{n})^{\sum_{i=1}^j (\alpha_i - \alpha_j ) -\varepsilon} \cdot (\frac{1}{n})^{\gamma + \sum_{l=1}^p a_l \dim W_l + \sum_{i=1}^j (\alpha_j - \alpha_i)} \\
& \leq c \, (\frac{1}{n})^{\gamma-\varepsilon} \to 0
\end{align*}
as $n \to \infty$. This proves
\begin{align*}
\dim_{\mathcal{H}} \Gr f ([0,1]^d) &\leq \frac{\varepsilon + \sum_{k=1}^p a_k \dim W_k + \sum_{i=1}^j (\alpha_j - \alpha_i) }{ \alpha_j }
\end{align*}
for all $1 \leq j \leq m$ and $\varepsilon>0$. Hence, \eqref{lgbound} follows by letting $\varepsilon \to 0$. It remains to prove
\begin{align}\label{lgbound2}
\dim_{\mathcal{H}} \Gr f ([0,1]^d) &\leq \sum_{j=1}^k \frac{\tilde{a}_j}{\tilde{a}_k} \dim \tilde{W}_j + \sum_{j=k+1}^p \dim \tilde{W}_j + \sum_{i=1}^m (1- \frac{\alpha_i}{\tilde{a}_k})
\end{align}
for any $1 \leq k \leq p$. We fix an integer $1 \leq k \leq p$. Observe that each $$(R_{n,1,i_1} + \ldots + R_{n,p,i_p}) \times T_{n,i_1, \ldots, i_p}$$
can be covered by $\ell_{n,k}$ cubes in $\mathbb{R}^{d+m}$ of side-lengths $n^{-\tilde{a}_k}$. Further, note that, since the side-lengths of $T_{n,i_1, \ldots, i_p}$ are at most $c (\frac{1}{n})^{\alpha_{i}-\varepsilon}$, $1 \leq i \leq m$, by the definition of $\tilde{a}_1, \ldots, \tilde{a}_p$, we have
\begin{align*}
\ell_{n,k} \leq c \, n^{\sum_{j=k+1}^p (\tilde{a}_k-\tilde{a}_j) \dim \tilde{W}_j - \sum_{i=1}^m (\alpha_i-\varepsilon-\tilde{a}_k)}.
\end{align*}
Hence, $\Gr f ([0,1]^d)$ can also be covered by $k_{n,1} \ldots k_{n,p} \cdot \ell_{n,k}$ cubes in $\mathbb{R}^{d+m}$ with edge-lengths $n^{-\tilde{a}_k}$. We now choose $0<\alpha_i'<\alpha_i-\varepsilon$, $1 \leq i \leq m$, and denote
$$\eta_k = \sum_{j=1}^k \frac{\tilde{a}_j}{\tilde{a}_k} \dim \tilde{W}_j + \sum_{j=k+1}^p \dim \tilde{W}_j + \sum_{i=1}^m \big( 1- \frac{\alpha_i'}{\tilde{a}_k}\big) .$$
Some simple calculations show that
\begin{align*}
& k_{n,1}  \ldots k_{n,p} \cdot \ell_{n,k} \cdot (n^{-\tilde{a}_k})^{\eta_k} \\
& \leq n^{\sum_{i=1}^m \big( \alpha_i' - (\alpha_i-\varepsilon) \big)} \to 0
\end{align*}
as $n \to \infty$, since $0<\alpha_i'<\alpha_i-\varepsilon$. This implies that
$$ \dim_{\mathcal{H}} \Gr f ([0,1]^d) \leq \eta_k.$$
Therefore, \eqref{lgbound2} follows by letting $\alpha_i' \to \alpha_i - \varepsilon$ and $\varepsilon \to 0$. The proof of Lemma \ref{upperbound} is complete. 
\end{proof}

\begin{remark} \label{remark1}
In view of the proof of Lemma \ref{upperbound} it is easy to see that if $f$ satisfies \eqref{fbound} with $\alpha_i$ replaced by $\beta_i$ for every $\beta_i < \alpha_i$, then  (4.6) and (4.7) are still valid.
\end{remark}

Recall that from the Jordan decomposition theorem (see e.g. \cite[p. 129]{Hirsch}) there exists a real invertible matrix $A \in \mathbb{R}^{m \times m}$ such that $A^{-1} D A$ is of the real canonical form, i.e. it consists of diagonal blocks which are either Jordan cell matrices of the form
\begin{equation*}
\left(
\begin{array}{cccccc}
\lambda & 1 &  & &    \\
 & \lambda & 1 &   \\
 &  &  \ddots & \ddots&   \\
 & & & \ddots & 1   \\
 & & & &  \lambda   
\end{array} \right) .
\end{equation*}
with $\lambda$ a real eigenvalue of $D$ or blocks of the form
\begin{equation*}
\left(
\begin{array}{cccccc}
\Lambda & I_2 &  & &    \\
 & \Lambda & I_2 &   \\
 &  &  \ddots & \ddots&   \\
 & & & \ddots & I_2   \\
 & & & &  \Lambda   
\end{array} \right) \quad \text{with} \\\  \Lambda = \left(
\begin{array}{cccccc}
a & -b   \\
b & a  
\end{array} \right) \quad \text{and} \\\ I_2 = \left(
\begin{array}{cccccc}
1 & 0   \\
0 & 1  
\end{array} \right),
\end{equation*}
where the complex numbers $a \pm ib, b \neq 0$, are complex conjugated eigenvalues of $D$.

We now state a result about the modulus of continuity for the components of $X$. We believe that by generalizing the methods used in \cite[Theorem 4.2]{LiWangXiao} one can get sharp results of this kind. But an inequality like \eqref{continuity} will suffice for our purposes.

\begin{prop} \label{4mod}
Let the assumptions of Theorem \ref{hausdorff} hold. If the operator $D$ is of the real canonical form, then there exist a continuous modification $X^*$ of $X$,  positive and finite constants $1\leq p_j \leq m$ $(j=1, \ldots, m)$ and $C_{4,1}$, depending only on $D,d$ and $m$ such that for every $j=1, \ldots, m$
\begin{align}\label{modulus}
\limsup_{r \downarrow 0} \sup_{\substack{x,y \in [0,1]^d\\ \tau_E(x-y) \leq r}} \frac{|X^*_j(x)-X^*_j(y)|}{\tau_E(x-y)^{\lambda_j} \big[ \log \big( \frac{1}{\tau_E(x-y)} \big) \big]^{p_j-\frac{1}{2}} } \leq C_{4,1} \quad a.s.
\end{align}
In particular, for every $\varepsilon >0$ $X^*$ satisfies a.s.
\begin{align}\label{continuity}
|X^*_j(x)-X^*_j(y)| \leq C_{4,2} \tau_E(x-y)^{\lambda_j-\varepsilon} \quad \text{for all} \quad x, y \in [0,1]^d
\end{align}
and for every $j=1, \ldots, m$, where $C_{4,2}$ is a positive and finite constant only depending on $D,d$ and $m$.
\end{prop}

\begin{proof}
The proof of this proposition is essentially an extension of the proof in \cite[Proposition 4.1]{MasonXiao}.

Denote the standard basis of $\rmm$ by $(e_1, \ldots, e_m)$. Since $X$ is $(E,D)$-operator-self-similar and has stationary increments, using the polar coordinates with respect to $E$, for all $x,y \in \rd$ we get
\begin{align*}
\mathbb{E} \big[ \big( X_j(x) - X_j(y) \big)^2 \big] & = \mathbb{E} \big[ \langle X(x) - X(y), e_j \rangle^2 \big] \\
& = \mathbb{E} \big[ \langle \tau_E (x-y)^D X\big( l_E(x-y) \big), e_j \rangle^2 \big] \\
& = \mathbb{E} \Big[ \Big( \sum_{k=1}^m X_k\big( l_E(x-y) \big) \langle \tau_E (x-y)^D e_k , e_j \rangle \Big)^2 \Big] .
\end{align*}
From the proof of \cite[Proposition 4.1]{MasonXiao}, we immediately derive
$$\mathbb{E} \big[ \big( X_j(x) - X_j(y) \big)^2 \big] \leq c \, \tau_E (x-y)^{2\lambda_j}  \big|\log  \tau_E (x-y) \big| ^{2(p_j-1)} , $$
where $1 \leq p_j \leq m$ are constants which only depend on $D$ and $0<c<\infty$ is a constant which only depends on $p$ (or $D$) and on $\max_{\theta \in S_E} \mathbb{E} [|X(\theta)|^2]$. Using this, the rest of the proof follows from the proof of \cite[Theorem 4.2]{LiWangXiao} by estimating the tail probabilities for Gaussian random fields in a standard way (see also \cite{Adler, Csaki, Talagrand}).
\end{proof}

We are now able to give a proof of Theorem \ref{hausdorff}.\newline \newline
\noindent \textit{Proof of Theorem \ref{hausdorff}.}
Let $A \in \mathbb{R}^{m \times m}$ be a real invertible matrix such that $\tilde{D} = A^{-1} DA$ is of the real canonical form. Consider the Gaussian random field $Y$ defined by
$$Y(x) = A^{-1} X(x), \quad x \in \rd.$$
Then $Y$ is an $(E,\tilde{D})$-operator-self-similar Gaussian random field in $\rmm$ and has stationary increments. Note that, since $A$ is invertible, the mapping $x \mapsto Ax$ is bi-Lipschitz and, hence, preserves the Hausdorff dimension. So without loss of generality we will assume that $D$ itself is of the real canonical form.

By Proposition \ref{4mod} there exists a continuous modification $X^*$ of $X$. By continuity, $X$ and $X^*$ are indistinguishable. So without loss of generality, we may and will assume that $X$ itself satisfies \eqref{continuity}. Then the upper bounds in \eqref{imdim} and \eqref{graphdim} follow from \eqref{continuity}, Lemma \ref{upperbound} and Remark \ref{remark1}. So it remains to prove the lower bounds in Theorem \ref{hausdorff}. We will do this in a standard way by using Frostman's theorem (see e.g. \cite{Adler, Fal, Kahane, Mattila}). Throughout this proof, let $c$ and $c'$ be positive unspecified constants which might change in every occurence. First we prove the lower bound in \eqref{imdim}. In fact, it suffices to show that for all sufficiently small $\varepsilon>0$ and all
$$0<\gamma <\min \Big\{ m, \frac{ \sum_{k=1}^p \frac{a_k}{1+\varepsilon} \dim W_k + \sum_{i=1}^j (\lambda_j - \lambda_i) }{ \lambda_j }, 1 \leq j \leq m \Big\}$$
the expected energy integral satisfies
\begin{align}\label{1energyintegral}
\mathcal{E}_\gamma = \int_{[0,1]^d \times [0,1]^d} \mathbb{E} [ \|X(x) - X(y) \|^{-\gamma} ] dx dy < \infty .
\end{align}
If $1\leq j \leq m$ is the integer such that $\sum_{i=1}^{j-1} \lambda_i < \sum_{k=1}^p a_k \dim W_k \leq \sum_{i=1}^{j} \lambda_i$ by Lemma \ref{cases} we may assume $j-1 < \gamma < j$. On the other hand, if $\sum_{i=1}^{m} \lambda_i < \sum_{j=1}^{p} a_j \dim W_j$ by Lemma \ref{cases} it suffices to assume that $\gamma <m$. For simplicity, we will only consider the case $\sum_{i=1}^{j-1} \lambda_i < \sum_{k=1}^p a_k \dim W_k \leq \sum_{i=1}^{j} \lambda_i$, since the remaining case is proven analogously. We fix $1\leq j \leq m$ satisfying the latter condition. In order to show \eqref{1energyintegral}, we observe that for any 
$$0<\gamma < \frac{\sum_{k=1}^p \frac{a_k}{1+\varepsilon} \dim W_k + \sum_{i=1}^j (\lambda_j - \lambda_i) }{ \lambda_j }$$
there exists an integer $1 \leq l \leq p$ such that
\begin{small}
\begin{align}\label{imgammainterval}
\frac{ \sum_{k=l+1}^p \frac{a_k}{1+\varepsilon} \dim W_k + \sum_{i=1}^j (\lambda_j - \lambda_i) }{ \lambda_j } \leq \gamma < \frac{ \sum_{k=l}^p \frac{a_k}{1+\varepsilon} \dim W_k + \sum_{i=1}^j (\lambda_j - \lambda_i) }{ \lambda_j } .
\end{align}
\end{small}In the following, we only consider the case $l=1$, since the remaining cases are easier because they require less steps of integration using Lemma \ref{1lbound}. So assuming \eqref{imgammainterval} with $l=1$, we can choose positive constants $\delta_2, \ldots, \delta_p$ such that $\delta_k > \frac{1+\varepsilon}{a_k}$, $2\leq k \leq p$ and
\begin{small}
\begin{align}\label{imgammadeltabound}
 \frac{ \sum_{k=2}^p \frac{\dim W_k}{\delta_k}  + \sum_{i=1}^j (\lambda_j - \lambda_i) }{ \lambda_j } < \gamma 
 < \frac{ \frac{a_1}{1+\varepsilon} \dim W_1 + \sum_{k=2}^p \frac{\dim W_k}{\delta_k}  + \sum_{i=1}^j (\lambda_j - \lambda_i) }{ \lambda_j } .
\end{align}
\end{small}
Note that, since $X$ is $(E,D)$-operator-self-similar with stationary increments
$$ X(x) - X(y) \stackrel{\rm d}{=} \tau_E (x-y) ^D X\big( l_E(x-y) \big)$$
for all $x,y \in \rd$. This implies that
\begin{align*}
\det \cov \big( X(x)-X(y) \big) &= \big( \det \tau_E(x-y)^D \big) ^2 \det \cov X\big( l_E(x-y) \big)\\
& = \prod_{j=1}^m \tau_E (x-y)^{2\lambda_j} \det \cov X\big( l_E(x-y) \big) .
\end{align*}
Since $X$ is Gaussian, continuous and proper, using the fact that $S_E$ is compact and bounded away from $0$ we have
$$ \det \cov X\big( l_E(x-y) \big) \geq \min_{\theta \in S_E} \det \cov X (\theta)>0.$$
For $x \neq y$ let 
$$Y_j (x,y) = \frac{X_j(x)-X_j(y)}{\tau_E (x-y)^{\lambda_j}}, \quad j=1, \ldots, m.$$
Then we have
$$\det \cov \big( Y(x,y) \big) >0.$$
Using this and the fact that $X$ is Gaussian, by exactly the same argument used in \cite[p. 279]{Xiao3} we get
\begin{align*}
\mathbb{E} [ \|X(x) - X(y) \|^{-\gamma} ] & =  \int_{\rmm} \frac{1}{(2 \pi)^{\frac{m}{2}}} \frac{1}{\sqrt{\det \cov \big( Y(x,y) \big) }} \Big[ \sum_{i=1}^m \big( u_i \tau_E (x-y) ^{\lambda_i} \big) ^2 \Big] ^{-\frac{\gamma}{2}} \\
& \quad \quad \times \exp \Big( -\frac{1}{2} u^T \cov \big( Y(x,y) \big) ^{-1} u \Big) du \\
& \leq c \int_{\rmm}  \Big[ \sum_{i=1}^m \big( u_i \tau_E (x-y) ^{\lambda_i} \big) ^2 \Big] ^{-\frac{\gamma}{2}} \exp \Big( -\sum_{i=1}^m u_i^2 \Big) du_1 \ldots du_m,
\end{align*}
In view of the proof of \cite[Theorem 2.1]{Xiao3}, by iterative integration we will obtain
\begin{align*}
\mathbb{E} & [ \|X(x) - X(y) \|^{-\gamma} ] \leq c \tau_E (x-y)^{-\gamma \lambda_j + \sum_{i=1}^j (\lambda_j - \lambda_i)} \\
& \quad \times \int_{\mathbb{R}^{d-j}} \Big[ \sum_{i=j+1}^m \big( u_i \tau_E (x-y) ^{\lambda_i-\lambda_j} \big) ^2  + c\Big] ^{-\frac{\gamma-j}{2}} \exp \Big( -\sum_{i=j+1}^m u_i^2 \Big) du_{j+1} \ldots du_m \\
&  \leq c \tau_E (x-y)^{-\gamma \lambda_j + \sum_{i=1}^j (\lambda_j - \lambda_i)} ,
\end{align*} 
where in the last inequality we used the assumption that $j-1< \gamma <j$.
Hence, by a substitution we get
$$ \mathcal{E}_\gamma \leq c \int_{\| x \| \leq 2} \tau_E (x)^{-\gamma \lambda_j + \sum_{i=1}^j (\lambda_j - \lambda_i)} dx .$$
Let $x = x_1 + \ldots + x_p$ for $x_i \in W_i$, $1 \leq i \leq p$. Since the $W_i$ are orthogonal in the associated Euclidean norm, it follows that $\| x \| \leq 2$ implies $\| x_i \| \leq 2$ for $i =1, \ldots, p$. Then Lemma \ref{pcbounds} yields
$$ \mathcal{E}_\gamma \leq c \int_{\| x_1 \| \leq 2} \ldots \int_{\| x_p \| \leq 2} \big(  \| x_1 \|^{\frac{1+\varepsilon}{a_1}} + \ldots + \| x_p \|^{\frac{1+\varepsilon}{a_p}} \big) ^{-\gamma \lambda_j + \sum_{i=1}^j (\lambda_j - \lambda_i)} dx_1 \ldots dx_p .$$
By using polar coordinates we further get
\begin{align}\label{impolar}
\mathcal{E}_\gamma \leq c \int_{0}^2 dr_1\ldots \int_{0}^2 dr_p \big(  r_1^{\frac{1+\varepsilon}{a_1}} + \ldots + r_p^{\frac{1+\varepsilon}{a_p}} \big) ^{-\gamma \lambda_j + \sum_{i=1}^j (\lambda_j - \lambda_i)} \prod_{j=1}^p r_j^{\dim W_j-1} .
\end{align}
Applying Lemma \ref{1lbound} to \eqref{impolar} with
\begin{equation*}
A= \sum_{j=1}^{p-1} r_j ^{\frac{1+\varepsilon}{a_j}}, \quad p = \gamma \lambda_j - \sum_{i=1}^j (\lambda_j - \lambda_i) \quad \text{and} \quad n = \dim W_p,
\end{equation*}
we integrate out $dr_p$ in the last expression and obtain that
\begin{small}
\begin{align*}
\mathcal{E}_\gamma \leq c' + c \int_{0}^2 dr_1\ldots \int_{0}^2 dr_{p-1} \big(  r_1^{\frac{1+\varepsilon}{a_1}} + \ldots + r_{p-1}^{\frac{1+\varepsilon}{a_{p-1}}} \big) ^{-\gamma \lambda_j + \sum_{i=1}^j (\lambda_j - \lambda_i) + \frac{\dim W_p}{\delta_p}} \prod_{j=1}^{p-1} r_j^{\dim W_j-1} .
\end{align*}
\end{small}
By repeating this procedure for $(p-2)$-times, we derive
\begin{align*}
\mathcal{E}_\gamma & \leq c' + c \int_{0}^2  \big(  r_1^{\frac{1+\varepsilon}{a_1}} \big) ^{-\gamma \lambda_j + \sum_{i=1}^j (\lambda_j - \lambda_i) + \sum_{k=2}^p \frac{\dim W_k}{\delta_k}} r_1^{\dim W_1-1} dr_1 < \infty .
\end{align*}
Note that the last integral is finite since the $\delta_k$ satisfy \eqref{imgammadeltabound}. This proves \eqref{1energyintegral}.

Now we prove the lower bound in \eqref{graphdim}. First consider the case $\dim_{\mathcal{H}} X ([0,1]^d) < m$. Since $\dim_{\mathcal{H}} \Gr X ([0,1]^d) \geq \dim_{\mathcal{H}} X ([0,1]^d)$, \eqref{imdim2} and the corresponding upper bound imply that $\dim_{\mathcal{H}} \Gr X ([0,1]^d) = \dim_{\mathcal{H}} X ([0,1]^d)$ a.s. The dimension of the graph can be larger as the dimension of the range if $\dim_{\mathcal{H}} X ([0,1]^d) = m$. In the latter case, \eqref{imdim2} implies that there exists an integer $1 \leq k \leq p$ such that
\begin{align}\label{lambdabound}
\sum_{j=1}^{k-1} \tilde{a_j} \dim \tilde{W}_j \leq \sum_{i=1}^{m} \lambda_i < \sum_{j=1}^{k} \tilde{a_j} \dim \tilde{W}_j .
\end{align}
By Lemma \ref{cases} it suffices to prove that $\dim_{\mathcal{H}} Gr X ([0,1]^d) \geq \gamma'$ a.s. for all
$$ 0< \gamma' < \sum_{j=1}^{k} \frac{\tilde{a}_j}{\tilde{a}_k} \dim \tilde{W}_j + \sum_{j=k+1}^p \dim \tilde{W}_j + \sum_{i=1}^{m} \big( 1- \frac{\lambda_i}{\tilde{a}_k} \big) .$$
Again by Frostman's theorem it is sufficient to show that
\begin{align}\label{2energyintegral}
\mathcal{G} _\gamma = \int_{[0,1]^d \times [0,1]^d} \mathbb{E} \big[ \big( \|x-y\|^2 + \|X(x) - X(y) \|^2 \big) ^{-\frac{\gamma}{2}} \big] dx dy < \infty
\end{align}
for all
$$ 0< \gamma < \sum_{j=1}^{k} \frac{\tilde{a}_j}{\tilde{a}_k + \varepsilon} \dim \tilde{W}_j + \sum_{j=k+1}^p \dim \tilde{W}_j + \sum_{i=1}^{m} \big( 1- \frac{\lambda_i + \varepsilon}{\tilde{a}_k} \big) ,$$
where $\varepsilon>0$ is arbitrarily small. Furthermore, since $\varepsilon>0$ is arbitrarily small, by Lemma \ref{cases} we may and will assume that
$$\gamma \in \Big( m+ \sum_{j=k+1}^p \dim \tilde{W}_j , m+ \sum_{j=k}^p \dim \tilde{W}_j \Big) .$$
In order to show \eqref{2energyintegral} we use the following well-known fact (see e.g. \cite{Xiao3}) that
\begin{align}\label{densitybound}
\int_0^\infty (s^2 + a^2 )^{-\frac{\gamma}{2}} ds = c(\gamma) a^{-\gamma +1}
\end{align}
for $\gamma >1$ and $c(\gamma)$ is a positive constant which only depends on $\gamma$. Let $Y(x,y)$ be as above. Then
\begin{align*}
\xi & := \mathbb{E} \big[ \big( \|x-y\|^2 + \|X(x) - X(y) \|^2 \big) ^{-\frac{\gamma}{2}} \big] \\
& = \int_{\rmm} \frac{1}{(2 \pi)^{\frac{m}{2}}} \frac{1}{\sqrt{\det \cov (Y)}} \big[ \|x-y\|^2 + \sum_{i=1}^m \big( u_i \tau_E (x-y) ^{\lambda_i} \big) ^2 \big] ^{-\frac{\gamma}{2}} \\
& \quad \quad \times \exp \big( -\frac{1}{2} u \cov (Y)^{-1} u^* \big) du_1 \ldots du_m \\
& \leq c \int_{\rmm} \big[ \|x-y\|^2 + \sum_{i=1}^m \big( u_i \tau_E (x-y) ^{\lambda_i} \big) ^2 \big] ^{-\frac{\gamma}{2}}  du_1 \ldots du_m \\
& = c \, \tau_E(x-y)^{-\gamma \lambda_1} \cdot  \int_{\rmm} \big[ u_1^2 + \frac{\|x-y\|^2}{\tau_E (x-y)^{2\lambda_1}} + \sum_{i=2}^m \big( u_i \tau_E (x-y) ^{\lambda_i-\lambda_1} \big) ^2 \big] ^{-\frac{\gamma}{2}}  du_1 \ldots du_m .
\end{align*}
We first integrate out $du_1$ using \eqref{densitybound} to obtain that
\begin{align*}
\xi \leq c \, \tau_E(x-y)^{-\gamma \lambda_1} \cdot  \int_{\mathbb{R}^{m-1}} \Big[ \frac{\|x-y\|^2}{\tau_E (x-y)^{2\lambda_1}} + \sum_{i=2}^m \big( u_i \tau_E (x-y) ^{\lambda_i-\lambda_1} \big) ^2 \Big] ^{-\frac{(\gamma -1)}{2}} du_2 \ldots du_m .
\end{align*}
Repeating this procedure for $du_2, \ldots, du_m$ we find that
\begin{align*}
\xi & \leq c\, \tau_E(x-y)^{-\gamma \lambda_m + \sum_{i=1}^m (\lambda_m-\lambda_i)} \|x-y\|^{- (\gamma - m)}  \tau_E(x-y)^ {(\gamma - m) \lambda_m } \\
& = c \, \tau_E(x-y)^{-\sum_{i=1}^m \lambda_i} \|x-y\|^{- (\gamma - m)} ,
\end{align*}
so that
\begin{align*}
\mathcal{G} _\gamma &\leq c \int_{[0,1]^d \times [0,1]^d} \tau_E(x-y)^{-\sum_{i=1}^m \lambda_i} \|x-y\|^{- (\gamma - m)} dx dy \\
& \leq c \int_{\|x \| \leq 2} \tau_E(x)^{-\sum_{i=1}^m \lambda_i} \|x\|^{- (\gamma - m)} dx.
\end{align*}
Now we consider two cases. First we consider the case that
$$\sum_{j=1}^{k-1} \tilde{a_j} \dim \tilde{W}_j = \sum_{i=1}^{m} \lambda_i$$
in \eqref{lambdabound}. In this case let us write $x = x_1 + \ldots + x_{k-1} + y$ for $x_i \in \tilde{W}_i$, $1 \leq i \leq k-1$ and $y \in \tilde{W}_k \oplus + \ldots \oplus \tilde{W}_p$. Since the $\tilde{W}_i$ are orthogonal, by the equivalence of norms, we can choose $c>0$ such that
$$\| x \| = \sqrt{\sum_{i=1}^{k-1} \| x_j \|^2 + \|y\|^2} \geq c \big(\sum_{i=1}^{k-1} \| x_j \| + \|y\| \big).$$
Using this and Lemma \ref{pcbounds}, as in the above we obtain
\begin{align*}
\mathcal{G} _\gamma & \leq c \int_{\|x_1 \| \leq 2} \ldots \int_{\|x_{k-1} \| \leq 2} \int_{\|y \| \leq 2} \big( \sum_{j=1}^{k-1} \| x_j \| ^{\frac{1+\varepsilon}{\tilde{a}_j}} + \| y \| ^{\frac{1+\varepsilon}{\tilde{a}_p}} \big) ^{-\sum_{i=1}^{m} \lambda_i} \\
& \quad \quad \times \big( \sum_{j=1}^{k-1} \| x_j \| + \| y \| \big) ^{-(\gamma-m)} dx_1 \ldots dx_{k-1} dy.
\end{align*}
For simplicity of notation let
$$g_j = \frac{\tilde{a}_j}{1+\varepsilon} , \quad  1 \leq j \leq p.$$
By using polar coordinates we get that
\begin{align}\label{graphpolar}
\mathcal{G}_\gamma & \leq c \int_{0}^2 dr \int_{0}^2 dr_{k-1}\ldots \int_{0}^2 dr_1 \big( \sum_{j=1}^{k-1} r_j^{\frac{1}{g_j}} + r^{\frac{1}{g_p}} \big) ^{-\sum_{i=1}^{m} \lambda_i} \\ \nonumber
& \quad \quad \times \big( \sum_{j=1}^{k-1} r_j + r \big) ^{-(\gamma-m)} \prod_{j=1}^{k-1} r_j^{\dim W_j-1} \cdot r^{\dim W_k + \ldots +\dim W_p -1} .
\end{align}
In order to show that the integral in \eqref{graphpolar} is finite, we will integrate $dr_1, \ldots , dr_{k-1}$ iteratively. Furthermore, we will assume that $k>1$ in \eqref{lambdabound} (if $k=1$, we can use \eqref{eql} to obtain \eqref{secondfinite} directly).

We first integrate $dr_1$. Since
$$ \sum_{i=1}^{m} \lambda_i \cdot g_1 > \dim \tilde{W}_1 ,$$
we can use \eqref{eqb} of Lemma \ref{2lbound} with
\begin{equation*}
A = \sum_{j=2}^{k-1} r_j^{\frac{1}{g_j}} + r^{\frac{1}{g_p}} \quad \text{and} \quad B = \sum_{j=2}^{k-1} r_j + r
\end{equation*}
to get that
\begin{align*}
\mathcal{G}_\gamma & \leq c \int_{0}^2 dr \int_{0}^2 dr_{k-1}\ldots \int_{0}^2 dr_2 \big( \sum_{j=2}^{k-1} r_j^{\frac{1}{g_j}} + r^{\frac{1}{g_p}} \big) ^{-\sum_{i=1}^{m} \lambda_i + g_1 \dim \tilde{W}_1} \\ \nonumber
& \quad \quad \times \big( \sum_{j=2}^{k-1} r_j + r \big) ^{-(\gamma-m)} \prod_{j=2}^{k-1} r_j^{\dim W_j-1} \cdot r^{\dim W_k + \ldots +\dim W_p -1} .
\end{align*}
Repeating this procedure for $dr_2, \ldots, dr_{k-2}$ we obtain
\begin{align*}
\mathcal{G}_\gamma & \leq c \int_{0}^2 dr \int_{0}^2 dr_{k-1} \big(  r_{k-1}^{\frac{1}{g_{k-1}}} + r^{\frac{1}{g_p}} \big) ^{-\sum_{i=1}^{m} \lambda_i + \sum_{j=1}^{k-2} g_j \dim \tilde{W}_j} \\ \nonumber
& \quad \quad \times \big( r_{k-1} + r \big) ^{-(\gamma-m)} r_{k-1}^{\dim W_{k-1}-1} \cdot r^{\dim W_k + \ldots +\dim W_p -1} .
\end{align*}
Note that, since we assumed
$$\sum_{j=1}^{k-1} \tilde{a_j} \dim \tilde{W}_j = \sum_{i=1}^{m} \lambda_i,$$
we have to use \eqref{eqn} of Lemma \ref{2lbound} to integrate $dr_{k-1}$ and obtain that
\begin{align}\label{firstfinite}
\mathcal{G}_\gamma & \leq c \int_{0}^2 r ^{-(\gamma-m)} \cdot \log \Big[ 1 + r^{\dim \tilde{W}_{k-1}} \cdot \big( r^{\frac{1}{g_p}} \big)^{- \frac{\dim \tilde{W}_{k-1}}{g_{k-1}}} \Big] \cdot r^{\dim W_k + \ldots +\dim W_p -1}  dr \\ \nonumber
& < \infty .
\end{align}
Note that the last integral is finite since
$$m - \gamma > \sum_{j=k}^p \dim \tilde{W}_j.$$
On the other hand if
$$\sum_{j=1}^{k-1} \tilde{a_j} \dim \tilde{W}_j < \sum_{i=1}^{m} \lambda_i,$$
in \eqref{lambdabound}, we write $x = x_1 + \ldots + x_{k} + y$ for $x_i \in \tilde{W}_i$, $1 \leq i \leq k$ and $y \in \tilde{W}_{k+1} \oplus + \ldots \oplus \tilde{W}_p$. By using the same steps as above we can derive
\begin{align*}
\mathcal{G}_\gamma & \leq c \int_{0}^2 dr \int_{0}^2 dr_{k} \int_{0}^2 dr_{k-1} \big(  r_{k-1}^{\frac{1}{g_{k-1}}} + r_k^{\frac{1}{g_k}} + r^{\frac{1}{g_{p}}} \big) ^{-\sum_{i=1}^{m} \lambda_i + \sum_{j=1}^{k-2} g_j \dim \tilde{W}_j} \\ \nonumber
& \quad \quad \times \big( r_{k-1} + r_k + r \big) ^{-(\gamma-m)} r_{k-1}^{\dim W_{k-1}-1} \cdot r_{k}^{\dim W_{k}-1} \cdot r^{\dim W_{k+1} + \ldots +\dim W_p -1} .
\end{align*}
Note that we have to use \eqref{eqb} again in order to integrate $dr_{k-1}$ and this gives
\begin{align}\label{almostdone}
\mathcal{G}_\gamma & \leq c \int_{0}^2 dr \int_{0}^2 dr_{k} \big( r_k^{\frac{1}{g_k}} + r^{\frac{1}{g_{p}}} \big) ^{-\sum_{i=1}^{m} \lambda_i + \sum_{j=1}^{k-1} g_j \dim \tilde{W}_j} \\ \nonumber
& \quad \quad \times \big( r_k + r \big) ^{-(\gamma-m)} r_{k}^{\dim W_{k}-1} \cdot r^{\dim W_{k+1} + \ldots +\dim W_p -1} .
\end{align}
We now integrate $dr_k$ in \eqref{almostdone} by using \eqref{eql} and we see that
\begin{align}\label{secondfinite}
\begin{split}
\mathcal{G}_\gamma & \leq c \int_{0}^2 r  ^{m - \gamma - \frac{1}{g_k} \big( \sum_{i=1}^{m} \lambda_i - \sum_{j=1}^{k-1} g_j \dim \tilde{W}_j \big) + \dim \tilde{W}_k} \cdot r^{\dim W_{k+1} + \ldots +\dim W_p -1} dr +c\\ 
& < \infty,
\end{split}
\end{align}
since
\begin{align*}
m - \gamma - \frac{1}{g_k} \Big( \sum_{i=1}^{m} \lambda_i - \sum_{j=1}^{k-1} g_j \dim \tilde{W}_j \Big) + \dim \tilde{W}_k > \sum_{j=k+1}^p \dim \tilde{W}_j
\end{align*}
for arbitrarily small $\varepsilon>0$. Combining \eqref{firstfinite} and \eqref{secondfinite} yields \eqref{2energyintegral}. This completes the proof of Theorem \ref{hausdorff}. \hfill $\Box$
\newline \newline
\appendix
\section{} \label{}
Denote
\begin{align*}
	\zeta &= \min \{ m, \frac{ \sum_{k=1}^p a_k \dim W_k + \sum_{i=1}^j (\lambda_j - \lambda_i) }{ \lambda_j }, 1 \leq j \leq m \} \\
\end{align*}
and
\begin{align*}
	\kappa &= \min \{\frac{ \sum_{k=1}^p a_k \dim W_k + \sum_{i=1}^j (\lambda_j - \lambda_i) }{ \lambda_j }, 1 \leq j \leq m, \\ 
	& \quad \sum_{j=1}^l \frac{\tilde{a}_j}{\tilde{a}_l} \dim \tilde{W}_j + \sum_{j=l+1}^p \dim \tilde{W}_j + \sum_{i=1}^m (1- \frac{\lambda_i}{\tilde{a}_l}) , 1 \leq l \leq p \}.
\end{align*}

\begin{lemma}\label{cases}
The following statements hold.
\begin{enumerate}[label=(\roman*)]
\item Assume that $\sum_{i=1}^{l-1} \lambda_i < \sum_{k=1}^p a_k \dim W_k \leq \sum_{i=1}^{l} \lambda_i$ for some $1 \leq l \leq m$. Then
$$\zeta = \frac{ \sum_{k=1}^p a_k \dim W_k + \sum_{i=1}^l (\lambda_l - \lambda_i) }{ \lambda_l }$$
and $\zeta \in (l-1,l]$. 
\item If there is $1 \leq k \leq p$ such that $\sum_{j=1}^{k-1} \tilde{a}_j \dim \tilde{W}_j \leq \sum_{i=1}^{m} \lambda_i < \sum_{j=1}^{k} \tilde{a}_j \dim \tilde{W}_j$ then $\zeta = m$,
$$ \kappa = \sum_{j=1}^k \frac{\tilde{a}_j}{\tilde{a}_k} \dim \tilde{W}_j + \sum_{j=k+1}^p \dim \tilde{W}_j + \sum_{i=1}^m (1- \frac{\lambda_i}{\tilde{a}_k})$$
and $\kappa \in \big( m + \sum_{j=k+1}^p \dim \tilde{W}_j, m + \sum_{j=k}^p \dim \tilde{W}_j\big]$.
\end{enumerate}
\end{lemma}

In order to prove Theorem \ref{hausdorff} we state and prove two lemmas which are needed in establishing the lower bounds in \eqref{imdim} and \eqref{graphdim}. Lemma \ref{1lbound} below with $n=1$ is an analogous statement to \cite[Lemma 3.6]{AychXiao} (see also \cite[p.212]{XiaoZhang}). Further, Lemma \ref{2lbound} with $n=1$ is the statement of \cite[Lemma 3.7]{AychXiao}. By using the methods in \cite{XiaoZhang, AychXiao} we can establish the statements for general $n \in \mathbb{N}$.

\begin{lemma}\label{1lbound}
Let $0<h<1$ be a given constant. Then, for any constants $\delta >h, M>0$, $p>0$ and any $n \in \mathbb{N}$, there exist positive and finite constants $C_{4,3}$ and $C_{4,4}$, depending only on $\delta, p$ and $M$ such that for all $0<A \leq M$
\begin{align} \label{IA}
I(A)  := \int_0^2 (A+r^h)^{-p} r^{n-1} dr \leq C_{4,3} (A^{-p+\frac{n}{\delta}}+C_{4,4} ).
\end{align}
\end{lemma}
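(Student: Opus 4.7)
The plan is to estimate $I(A)$ by interpolating between its two natural regimes: the region near $r = 0$ where $A$ dominates, and the region where $r^h$ dominates. The key tool is the elementary inequality
\[
A + r^h \;\geq\; A^{\theta}\, r^{h(1-\theta)} \qquad \text{valid for every } \theta \in [0,1],
\]
which follows immediately from $(A+r^h)^{\theta} \geq A^{\theta}$ and $(A+r^h)^{1-\theta} \geq r^{h(1-\theta)}$. Raising to the power $-p$ gives $(A+r^h)^{-p} \leq A^{-\theta p}\, r^{-hp(1-\theta)}$, and the entire argument hinges on choosing $\theta$ so that the surviving power of $A$ matches the target exponent $-p + n/\delta$.

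To that end I would set $\theta = 1 - n/(\delta p)$. When $\delta p \geq n$ this $\theta$ lies in $[0,1)$, and a short calculation gives $-\theta p = -p + n/\delta$ and $-hp(1-\theta) = -hn/\delta$. Pulling the factor $A^{-p+n/\delta}$ outside the integral reduces the task to bounding
\[
\int_0^2 r^{n(1 - h/\delta) - 1}\, dr,
\]
which is finite \emph{precisely} because the hypothesis $\delta > h$ forces the exponent to exceed $-1$. This yields $I(A) \leq C_1\, A^{-p+n/\delta}$ with $C_1$ depending only on $\delta, p, h, n$.

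The complementary regime $\delta p < n$ would require $\theta < 0$ above, so I would instead take $\theta = 0$ in the master inequality, which collapses to $(A+r^h)^{-p} \leq r^{-hp}$. Here $n - hp > n - \delta p > 0$ (again using $\delta > h$), so $\int_0^2 r^{n-hp-1}\, dr$ converges to a pure constant $C_2$. Combining the two cases, one obtains $I(A) \leq \max(C_1, C_2)\bigl(A^{-p+n/\delta} + 1\bigr)$ uniformly for $A \in (0, M]$; the upper bound $M$ itself plays no explicit role in the two estimates.

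The main --- and fairly mild --- obstacle is appreciating the role of the strict inequality $\delta > h$: it is precisely what provides integrability at $r = 0$ in the first case. A more classical alternative would be to split the range of integration at $r = A^{1/h}$ and deal separately with the subcases $n > hp$, $n = hp$, and $n < hp$; the borderline case $n = hp$ there produces a logarithmic factor $\log(1/A)$ that must be absorbed into $A^{-p+n/\delta}$ via the standard inequality $\log(1/A) \leq C_{\varepsilon}\, A^{-\varepsilon}$ with $\varepsilon = p - n/\delta > 0$. The AM--GM route I propose is attractive because it sidesteps this case analysis entirely.
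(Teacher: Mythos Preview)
Your argument is correct and takes a genuinely different route from the paper's. The paper performs the change of variable $s = A + r^h$, bounds $(s-A)^{(n-h)/h} \le s^{(n-h)/h}$, and then uses $s \le M+2$ together with $\delta > h$ to replace the exponent $n/h$ by $n/\delta$ at the cost of a factor $(M+2)^{n/h - n/\delta}$; the case $p = n/\delta$ is handled separately because the resulting antiderivative would otherwise be logarithmic. Your weighted AM--GM bound $A + r^h \ge A^{\theta} r^{h(1-\theta)}$ with $\theta = 1 - n/(\delta p)$ hits the target exponent of $A$ in a single step, reduces everything to the convergence of $\int_0^2 r^{\,n(1-h/\delta)-1}\,dr$, and absorbs the borderline $p = n/\delta$ automatically (it is simply $\theta = 0$). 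As you note, your constants do not involve $M$, whereas the paper's do; on the other hand both proofs produce constants that also depend on $h$ and $n$, so the dependence asserted in the lemma is slightly understated either way.
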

\begin{proof}
For the sake of completeness let us give a proof. Throughout this proof, let $c$ be an unspecified positive constant which might change in each occurence. We first assume that $p = \frac{n}{\delta}$. In this case we have
\begin{align*}
I(A)  &= \int_0^2 (A+r^h)^{-p} r^{n-1} dr \leq \int_0^2 r^{-ph} r^{n-1} dr = \int_0^2 r^{-\frac{n}{\delta}h+n-1} dr < \infty,
\end{align*}
since $\delta >h$ by assumption. So it suffices to prove \eqref{IA} for $p \neq \frac{n}{\delta}$. By using the substitution $s = A+r^h$ we get
\begin{align*}
I(A)  &= \int_0^2 (A+r^h)^{-p} r^{n-1} dr = c \int_A^{A+2^h} s^{-p} (s-A)^{\frac{n-h}{h}} ds .
\end{align*}
Since $0<h<1$, elementary calculation shows that for all $0<A\leq M$
\begin{align*}
I(A)  & \leq c \int_A^{A+2^h} s^{-p+\frac{n-h}{h}} ds \leq c(M+2)^{\frac{n}{h}-\frac{n}{\delta}} \int_A^{A+2^h} s^{-p-1 + \frac{n}{\delta}} ds \\
& \leq C_{4,3} (A^{-p+\frac{n}{\delta}}+C_{4,4} )
\end{align*}
and the proof is complete.
\end{proof}
\begin{lemma}\label{2lbound}
Let $\alpha$, $\beta$, $\eta$ be positive constants and $n \in \mathbb{N}$. For $A>0$ and $B>0$ define
\begin{align}
J := J(A,B) = \int_0^2 \frac{r^{n-1}}{(A+r^\alpha)^\beta (B+r)^\eta} dr.
\end{align}
Then there exist positive constants $C_{4,5}$ and $C_{4,6}$, depending only on $\alpha$, $\beta$, $\eta$ such that the following holds for all reals $A,B>0$ satisfying $A^{\frac{1}{\alpha}} \leq C_{4,5}B$:
\begin{enumerate}[label=(\roman*)]
\item if $\alpha\beta >n$, then
\begin{align}\label{eqb}
J & \leq C_{4,6} \frac{1}{A^{\beta-\frac{n}{\alpha}}B^\eta}
\end{align}
\item if $\alpha\beta =n$, then
\begin{align}\label{eqn}
J & \leq C_{4,6} \frac{1}{B^\eta} \log (1+B^n A^{-\frac{n}{\alpha}} )
\end{align}
\item if $0<\alpha\beta <n$ and $\alpha \beta + \eta \neq n$, then
\begin{align}\label{eql}
J & \leq C_{4,6} (\frac{1}{B^{\alpha \beta + \eta - n}} + 1) .
\end{align}
\end{enumerate}
\end{lemma}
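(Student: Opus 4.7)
The plan is to dissect $[0,2]$ at the two natural scales $A^{1/\alpha}$ (where the factor $(A+r^{\alpha})^{\beta}$ switches its dominant behaviour) and $B$ (where $(B+r)^{\eta}$ does so), and to estimate the resulting pieces. After possibly shrinking $C_{4,5}$ to be at most $1$, the hypothesis $A^{1/\alpha}\le C_{4,5}B$ forces $A^{1/\alpha}\le B$, so the scales are ordered: $0<A^{1/\alpha}\le\min(B,2)\le 2$. Accordingly I would write $J=J_{1}+J_{2}+J_{3}$, where $J_{i}$ is the integral over $[0,A^{1/\alpha}]$, $[A^{1/\alpha},\min(B,2)]$, and $[\min(B,2),2]$ respectively, the last being empty when $B\ge 2$. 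On $J_{1}$ we have $A+r^{\alpha}\asymp A$ and $B+r\asymp B$; on $J_{2}$, $A+r^{\alpha}\asymp r^{\alpha}$ while still $B+r\asymp B$; on $J_{3}$ (if non-empty), $A+r^{\alpha}\asymp r^{\alpha}$ and $B+r\asymp r$. Substituting these asymptotics reduces each $J_{i}$ to an elementary integral of the form $\int r^{n-1-\cdots}\,dr$.

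Next I would evaluate each case. For (i), $\alpha\beta>n$ makes the $J_{2}$ integrand $r^{n-1-\alpha\beta}$ strongly decreasing, so both $J_{1}$ and $J_{2}$ contribute $\lesssim A^{n/\alpha-\beta}/B^{\eta}$ (the $J_{2}$ integral being dominated by its lower endpoint $A^{1/\alpha}$). The term $J_{3}$ produces $\int_{B}^{2}r^{n-1-\alpha\beta-\eta}\,dr$ multiplied by $B^{-\eta}$ absorbed appropriately, and using $A^{1/\alpha}\le C_{4,5}B$ to convert a $B$-power into an $A$-power absorbs it into the same bound, yielding \eqref{eqb}. For (ii), the $J_{2}$ integrand becomes $r^{-1}$, so $J_{2}\lesssim B^{-\eta}\log(\min(B,2)/A^{1/\alpha})$, which is comparable to $B^{-\eta}\log(1+B^{n}A^{-n/\alpha})$ since $B/A^{1/\alpha}\ge 1/C_{4,5}\ge 1$; the pieces $J_{1}$ and $J_{3}$ are each $O(B^{-\eta})$ and are dominated by the log term, yielding \eqref{eqn}.

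For (iii), $\alpha\beta<n$ makes the $J_{2}$ integrand integrable at $0$, so $J_{2}$ is controlled by its upper endpoint and produces
\begin{equation*}
J_{2}\;\lesssim\;\frac{(\min(B,2))^{n-\alpha\beta}}{B^{\eta}}\;\lesssim\;B^{n-\alpha\beta-\eta}\;=\;\frac{1}{B^{\alpha\beta+\eta-n}}.
\end{equation*}
The term $J_{1}$ contributes $A^{n/\alpha-\beta}/B^{\eta}$, absorbed in the same bound via $A^{n/\alpha-\beta}\le(C_{4,5}B)^{n-\alpha\beta}$, which is legitimate because $n/\alpha-\beta>0$ in this case. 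Finally $J_{3}$ reduces to $\int_{B}^{2}r^{n-1-\alpha\beta-\eta}\,dr$; the excluded equality $\alpha\beta+\eta\neq n$ guarantees a clean power-law answer, dominated either by its lower or upper endpoint, once more of the form $\lesssim 1/B^{\alpha\beta+\eta-n}$, yielding \eqref{eql}.

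The main obstacle is the bookkeeping: one has to check that the three sub-intervals remain well-ordered across every regime ($B<2$ versus $B\ge 2$, and the several sign-regimes of $n-\alpha\beta$ and $n-\alpha\beta-\eta$), and to verify that the extra factor $r^{n-1}$ merely shifts the exponents of the $n=1$ computations from \cite{AychXiao,XiaoZhang} and introduces no genuinely new case distinction. Once the splitting is in place, each sub-integral is a one-variable elementary computation, so the routine parts can be handled exactly in the style of Lemma \ref{1lbound}.
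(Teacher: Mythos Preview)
Your approach is correct and is precisely the one the paper has in mind: the paper omits the proof entirely, saying only that it follows \cite[Lemma 3.7]{AychXiao} with minor adjustments, and splitting $[0,2]$ at the scales $A^{1/\alpha}$ and $B$ and estimating each piece is exactly the Ayache--Xiao argument, with the extra factor $r^{n-1}$ merely shifting exponents. One small point worth tightening is your claimed ordering $A^{1/\alpha}\le\min(B,2)$, which can fail when $B\ge 2$ but $A^{1/\alpha}>2$; in that regime the whole interval $[0,2]$ falls into the $J_1$-behaviour and the bounds are even easier, so this is covered by the bookkeeping caveat you already flag.
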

\begin{proof}
Since the proof of this lemma is essentially identical to the proof that can be found in \cite[Lemma 3.7]{AychXiao} and is carried out by using exactly the same steps, we omit it and leave it to the reader.
\end{proof} 

\section*{Acknowledgements}
The author would like to thank the anonymous referees for valuable comments which helped to improve the manuscript.

This work has been supported by Deutsche Forschungsgemeinschaft (DFG) under grant KE1741/ 6-1

\bibliographystyle{plain}

\end{document}